\newcommand{\subtitle}[1]{%
  \posttitle{%
    \par\end{center}
    \begin{center}\large#1\end{center}
    \vskip0.5em}%
}
\newcommand{\wt}{\widetilde}
\newcommand{\R}{\mathbb R}
\newcommand{\N}{\mathbb{N}} 
\newcommand{\Z}{\mathbb{Z}}
\newcommand{\M}{\mathcal{M}}
\newcommand{\Leg}{\mathcal{L}}
\newcommand{\Pone}{\mathcal{P}^1}
\newtheorem{cor}{Corollary}[section]
\newtheorem{defn}[cor]{Definition}
\newtheorem{prop}[cor]{Proposition}
\newtheorem*{prop*}{Proposition}
\newtheorem{thm}[cor]{Theorem}
\newtheorem{exmp}[cor]{Example}
\newtheorem{lem}[cor]{Lemma}
\newtheorem{rmk}[cor]{Remark}
\title{A connecting theorem for geodesic flows on the 2-torus}
\author{Stefan Klempnauer\thanks{stefan.klempnauer@rub.de}}
\affil{Faculty of Mathematics, Ruhr-University Bochum}
\begin{document}
\maketitle

\begin{abstract}
\noindent We use a result of J. Mather on the existence of connecting orbits for compositions of monotone 
twist maps of the cylinder to prove the existence of connecting geodesics on the unit tangent bundle
$ST^2$ of the 2-torus in regions without invariant tori. 
\end{abstract}

The author thanks the SFB CRC/TRR 191 \emph{Symplectic Structures in Geometry, Algebra and Dynamics} of the DFG and the Ruhr-University Bochum for the funding of his research.

\section{Introduction and main result}

Let $T^2 \cong \R^2 / \Z^2$ denote the 2-torus with universal covering $\pi: \R^2 \to T^2$. The tangent bundle
is given by $TT^2 \cong T^2 \times \R^2$. In the following we will recall the definition of a Finsler metric. For a general 
overview of Finsler geometry see \cite{bao00}. 

\begin{defn}
A \emph{Finsler metric} on $T^2$ is a map 
$$
F : TT^2 \to [0,\infty)
$$
with the following properties
\begin{enumerate}
\item (Regularity) $F$ is $C^\infty$ on $TT^2 - 0$ 
\item (Positive homogeneity) $F(x,\lambda y) = \lambda \cdot F(x,y)$ for $\lambda > 0$
\item (Strong convexity) The hessian 
$$
(g_{ij}(x,v)) = \left(\partial_{v_iv_j} \frac{1}{2} F(x,v)^2 \right)
$$
is positive-definite for every $(x,v) \in TT^2 - 0$. 
\end{enumerate}
\end{defn}

A Finsler metric is called \emph{reversible} if $F(x,v) = F(x,-v)$ for every $(x,v) \in TT^2$. The 
\emph{unit tangent bundle} $ST^2 \cong T^2 \times S^1$ is given by $ST^2 = F^{-1}(\{1\})$.\\

We define the \emph{length} $l_F(c)$ of a piecewise differentiable curve $c:[a,b] \to T^2$ via 
$$
l_F(c) = \int_a^b F(\dot c(t)) dt
$$
Lifting the Finsler metric $F$ to $\R^2$ allows us to define the length of a curve $c:[a,b] \to \R^2$ 
in the universal covering in the same manner. 
The \emph{geodesic flow} $\phi^t: ST^2 \to ST^2$ is the restriction of the Euler-Lagrange flow of the 
Lagrangian $L_F$, with 
$$
L_F = \frac{1}{2}F^2
$$
to the unit tangent bundle. A \emph{geodesic} we call either a trajectory of the Euler-Lagrange flow, or 
its projection to $T^2$, i.e. a geodesic is a curve $t \mapsto c(t) \subset T^2$ satisfying the Euler-Lagrange equation
$$
\partial_x L_F(c, \dot c) - \partial_t(\partial_v L_F(c, \dot c)) = 0
$$ 
In general we will assume a geodesic $c$ to be parametrized by arclength, i.e. $\dot c \subset ST^2$. 
Sometimes it can be convenient to consider lifts $\wt c : \R \to \R^2$ or $ \dot{\wt c}: \R \to T\R^2$
of a geodesic to the universal cover. These can be seen as
the geodesics of the lifted $\Z^2$-periodic Finsler metric $\wt F$ on $\R^2$.

\begin{defn}
A subset $\Lambda \subset ST^2$ is called an \emph{invariant torus} if $\Lambda$ is the graph
of a continuous map $X: T^2 \to S^1$ and $\phi^t$-invariant.
An invariant torus (or more generally any $\phi^t$-invariant set) $\Lambda$ has \emph{bounded direction}
(with respect to $v \in \Z^2 -\{0\}$), if the lifts $ \tilde c: \R \to \R^2$ of geodesics in $\Lambda$ are graphs over
 the euclidean line $\R v$. 
\end{defn}

In analogy to a Birkhoff region of instability we define the following notion of an instability region
of the geodesic flow. 

\begin{defn}
An \emph{instability region} is a compact invariant subset $U \subset ST^2$ with boundary being
the disjoint union of two invariant tori $\Lambda_-, \Lambda_+$, such that every invariant torus 
$\Lambda \subset U$ is equal to $\Lambda_-$ or $\Lambda_+$. 
\end{defn}

The main result is the following. 

\begin{thm} \label{main}
Let $F$ be a Finsler metric on $T^2$ and let $U \subset ST^2$ be an instability region with bounded direction 
(with respect to $e_1$). Furthermore, we assume that there are no closed geodesics
in the boundary of $U$.
Then there exists an $F$-geodesic $c: \R \to T^2$ that connects the two boundary components of $U$ (i.e. there exist sequences $t_n \to \infty$ and $s_n \to -\infty$ with $\lim_{n \to \infty} \dot c(t_n) \in \Lambda_+$ and
$\lim_{n \to \infty} \dot c(s_n) \in \Lambda_-$, where $\Lambda_-$ and $\Lambda_+$ are the two boundary components of $U$).
\end{thm}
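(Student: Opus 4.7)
The plan is to reduce Theorem~\ref{main} to Mather's variational connecting theorem for compositions of monotone twist maps of the cylinder. The first step is to set up a Poincaré section transverse to $\phi^t$ inside $U$. Because $U$ has bounded direction with respect to $e_1$, every geodesic in $U$ lifts to $\R^2$ as a graph over $\R e_1$, so along any such geodesic the first component $\dot c_1$ never vanishes. By compactness of $U$ and continuity this forces $\dot c_1 > 0$ uniformly on $U$, so that the section
$$
\Sigma := \{(x,v) \in ST^2 \ : \ x_1 = 0,\ v_1 > 0\}
$$
(with $x_1 \in \R/\Z$) is everywhere transverse to the flow in $U$ and the first-return time is uniformly bounded.

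Writing $\Phi : U \cap \Sigma \to U \cap \Sigma$ for the resulting first-return map, the next step is to realise $\Phi$ as a composition of monotone twist maps of the cylinder. Strong convexity of $F$, combined with the exact symplectic structure on $\Sigma$ inherited via the Legendre transform from $T^*T^2$, is what will produce the twist condition. Concretely, by inserting auxiliary vertical sections $\{x_1 = k/N\}$ for $k = 0,\dots,N-1$ one may factor $\Phi = \Phi_{N-1} \circ \cdots \circ \Phi_0$, and for $N$ large enough each factor $\Phi_k$, expressed on the lifted cylinder $\widetilde\Sigma \cong \R \times I$, is a monotone twist map whose generating function is the Finsler action on the geodesic arc between consecutive crossings. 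The two invariant tori $\Lambda_\pm$, being graphs of bounded direction in $e_1$, intersect $\Sigma$ in two essential invariant circles $\gamma_\pm \subset \Sigma$, and by the definition of an instability region the open annulus bounded by $\gamma_-$ and $\gamma_+$ contains no other essential invariant circle of the composition.

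With these identifications, $U \cap \Sigma$ is precisely a Birkhoff region of instability for $\Phi_{N-1} \circ \cdots \circ \Phi_0$. Mather's connecting theorem then produces an orbit $(p_n)_{n \in \Z} \subset U \cap \Sigma$ of $\Phi$ whose $\alpha$- and $\omega$-limit sets lie on $\gamma_-$ and $\gamma_+$ respectively. Re-flowing this orbit under $\phi^t$ and projecting to $T^2$ yields the desired geodesic $c : \R \to T^2$; the sequences $s_n, t_n$ in the statement are just the times at which $c$ crosses $\Sigma$.

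The main obstacle is twofold. First, one has to verify the twist condition carefully: the full return map $\Phi$ need not itself be a twist map, and writing it as a composition of twist maps requires choosing the intermediate sections finely enough and checking a monotonicity property that rests on the strong convexity of $F$ and on control of the "turning" of geodesics between consecutive crossings. Second, Mather's theorem needs the boundary invariant circles $\gamma_\pm$ to carry no periodic points of $\Phi$, since otherwise heteroclinic behaviour inside the annulus could be obstructed; this is precisely what the no-closed-geodesics hypothesis on $\partial U$ guarantees, because a closed geodesic in $\Lambda_\pm$ would descend to a periodic orbit of $\Phi$ on $\gamma_\pm$. Matching the structural assumptions of the instability region to the precise hypotheses of Mather's theorem is thus the decisive technical bridge.
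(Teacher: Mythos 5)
Your overall strategy is the one the paper follows --- reduce to Mather's connecting theorem for a finite composition of positive monotone twist maps, turn the boundary tori into invariant circles of that composition, and use the absence of closed geodesics on $\partial U$ to get irrational rotation numbers --- but two steps you treat as routine are exactly where the real work lies, and as written they are gaps.

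First, globalization. Mather's theorem (and Birkhoff's theorem \ref{twist-lipschitz}, which you implicitly need in order to speak of the annulus bounded by $\gamma_-$ and $\gamma_+$ and to compare invariant circles) applies to maps in $\Pone$, i.e.\ finite compositions of positive monotone twist maps of the \emph{whole} cylinder. Your return map $\Phi$ and its factors $\Phi_k$ are only defined where geodesics actually reach the next section with $v_1>0$: outside the bounded-direction region nothing prevents a geodesic leaving $\Sigma$ with $v_1>0$ from turning around, so each $\Phi_k$ is a priori a map of a sub-annulus, not of the cylinder, and the uniform bound $\dot c_1>0$ you obtain from compactness holds only on $U$, not on all of $\Sigma$. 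You must extend the system beyond $U\cap\Sigma$ while preserving exact symplecticity and positive twist of every factor. The paper does this not at the return-map level but by passing to the time-periodic Lagrangian $L(t,x,r)=F(te_1+xe_1^\perp,\,e_1+re_1^\perp)$, modifying it to $L_R$ with $L_R=\frac{D}{2}r^2$ for large $|r|$ (so the Euler--Lagrange flow is complete and the time-1 map is globally defined, a shear at infinity), and then slicing \emph{time} via theorem \ref{globalleg} to exhibit the time-1 map as an element of $\Pone$; your space-slicing with the Finsler action as generating function is a reasonable variant of the same idea, but the extension step has to be supplied and your sketch does not address it.

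Second, you assert that ``by the definition of an instability region'' the annulus between $\gamma_-$ and $\gamma_+$ contains no other essential invariant circle. The definition only excludes invariant \emph{tori} of the geodesic flow inside $U$; to exclude essential invariant circles of the twist-map composition you must show that any such circle strictly between $\gamma_-$ and $\gamma_+$ produces a $\phi^t$-invariant torus inside $U$. That is the content of lemma \ref{circletotorus} and it is not formal: one uses theorem \ref{twist-lipschitz} to see the circle is a Lipschitz graph, shows the associated geodesics are pairwise non-crossing graphs foliating $\R^2$, and verifies that the resulting unit vector field is continuous and $\Z^2$-periodic (this is where $v=e_1$ enters), before reaching the contradiction. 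Without this converse direction the identification of $U\cap\Sigma$ with a Birkhoff region of instability is unjustified. The remaining ingredients of your sketch (no periodic points on the boundary circles, hence irrational rotation numbers, from the no-closed-geodesics hypothesis; re-flowing the Mather orbit to get the connecting geodesic) do match the paper.
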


\section{Return maps of the geodesic flow on $T^2$ and twist maps}

\subsection{Return maps as Lagrangian time-1 maps}

Let $F$ be a Finsler metric on the 2-torus $T^2$ with $\wt F$ denoting the lifted metric on the universal cover 
$\R^2$. For a vector $w = (w_1,w_2) \in \R^2$ we will write $w^\perp$ for the vector 
$w^\perp = (-w_2, w_1)$.

\begin{defn} \label{lagrangiandefn}
For a Finsler metric $F$ and a prime element $v \in \Z^2 \backslash \{0\}$ we define a time-dependent Lagrangian 
on $\R$ via 
$$
\wt L(t,x,r) = \wt F(tv + xv^\perp, v + rv^\perp)
$$ 
which is 1-periodic in the time $t$ and 
1-periodic in the first component $x$. The periodicity is a direct consequence of the $\Z^2$-periodicity
of $\wt F$. Since $\wt L$ is periodic in $x$ it can be interpreted as a time-periodic Lagrangian $L$ on $S^1$.
Note that $\wt L$ is strictly convex, i.e. $\partial_{rr} \wt L > 0$ because of the strict convexity of the 
Finsler metric $\wt F$.  
\end{defn}

We have a correspondence of lifted geodesics, which are graphs over $v\R$ and solutions of $L$ in the
following theorem by J.P. Schr\"oder. A special case can be found in \cite{schroeder13}.

\begin{thm} \label{schrthm}
Let $F$ and $L$ as above and let $\theta: \R \to \R$ be a smooth function.
Let $\gamma: \R \to \R^2$ be the curve given by
$$
\gamma(t) = tv + \theta(t)v^\perp
$$
Then $\gamma$ is a reparametrization of an $\wt F$-geodesic if and only if $\theta$ is an
Euler-Lagrange solution of $\wt L$. 
\end{thm}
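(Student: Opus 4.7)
The plan is to reduce the statement to two standard facts: that the Finsler length is invariant under reparametrization and its critical points (under compactly supported, endpoint-fixing variations) are precisely the reparametrized geodesics, and that being a graph over $v\R$ is an open condition stable under small perturbations. The Lagrangian $\wt L$ in Definition~\ref{lagrangiandefn} is tailored so that the length functional restricted to graphs over $v\R$ literally coincides with the $\wt L$-action.

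First I would verify this key identity. Differentiating $\gamma(t) = tv + \theta(t)v^\perp$ gives $\dot\gamma(t) = v + \dot\theta(t)v^\perp$, and unwinding the definition of $\wt L$ yields
$$
\wt F(\gamma(t),\dot\gamma(t)) = \wt F\bigl(tv + \theta(t)v^\perp,\, v + \dot\theta(t)v^\perp\bigr) = \wt L(t,\theta(t),\dot\theta(t)),
$$
so integration gives $l_{\wt F}(\gamma|_{[a,b]}) = \int_a^b \wt L(t,\theta,\dot\theta)\, dt$. The forward direction then follows formally: if $\gamma$ is a reparametrized $\wt F$-geodesic, it is critical for $l_{\wt F}$ under every compactly supported endpoint-fixing variation, in particular under deformations of the form $\gamma_\epsilon(t) = tv + (\theta(t) + \epsilon\eta(t))v^\perp$ with $\eta$ of compact support. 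The first variation of length along such a deformation equals the first variation of the $\wt L$-action in the $\theta$-direction, and its vanishing for all such $\eta$ is exactly the Euler-Lagrange equation for $\wt L$.

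For the converse, suppose $\theta$ solves the Euler-Lagrange equation of $\wt L$, and let $\gamma_\epsilon$ be an arbitrary compactly supported endpoint-fixing variation of $\gamma$. For $\epsilon$ sufficiently small the $v$-component of $\gamma_\epsilon$ remains strictly monotone in the time parameter, so $\gamma_\epsilon$ admits a reparametrization of the form $\wt\gamma_\epsilon(s) = sv + \wt\theta_\epsilon(s)v^\perp$. Reparametrization invariance of length, combined with the key identity, gives $l_{\wt F}(\gamma_\epsilon) = \int \wt L(s,\wt\theta_\epsilon,\dot{\wt\theta}_\epsilon)\, ds$, and $\wt\theta_\epsilon$ is an admissible variation of $\theta$. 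The Euler-Lagrange assumption forces the first variation to vanish, so $\gamma$ is critical for $l_{\wt F}$ and hence a reparametrized $\wt F$-geodesic.

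The main obstacle I anticipate is the bookkeeping in the converse: verifying that the reparametrization producing $\wt\theta_\epsilon$ depends smoothly on $\epsilon$, respects compact support, and matches the endpoint values of $\theta$. Everything else follows from the design of $\wt L$, which makes the length functional on graphs agree with an ordinary time-dependent action — this is the heart of the correspondence and the reason positive homogeneity of $F$ is essential.
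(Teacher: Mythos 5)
Your proof is correct, and its first half follows the paper exactly: you establish the same key identity $l_{\wt F}(\gamma|_{[a,b]})=\int_a^b \wt L(t,\theta,\dot\theta)\,dt$ and derive the forward direction by pushing variations of $\theta$ to $v^\perp$-variations of $\gamma$. Where you genuinely diverge is the converse. The paper stays infinitesimal: for an arbitrary proper variation of $\gamma$ with variation field $X$ it decomposes $X(t)=\lambda(t)\dot\gamma(t)+\mu(t)v^\perp$ (possible since $\{\dot\gamma(t),v^\perp\}$ is a basis), kills the tangential contribution to the first variation of length by observing that $t\mapsto\gamma(t+s\lambda(t))$ is a reparametrization of $\gamma$, and thereby reduces criticality of $l_{\wt F}$ to criticality under $v^\perp$-direction variations, which the action identity handles. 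You instead work at the level of curves: you reparametrize each varied curve $\gamma_\epsilon$ as a graph $s\mapsto sv+\wt\theta_\epsilon(s)v^\perp$ and invoke reparametrization invariance of the length together with the identity. The bookkeeping you flag as the main obstacle is indeed the only missing step, and it closes routinely: on the compact interval $[a,b]$ the $v$-component $\tfrac{1}{\Vert v\Vert^2}\langle\gamma_\epsilon(t),v\rangle$ has $t$-derivative $1+O(\epsilon)$ uniformly, so for small $\epsilon$ the implicit function theorem yields a reparametrizing time $\tau_\epsilon(s)$ smooth jointly in $(s,\epsilon)$, with $\wt\theta_0=\theta$; an endpoint-fixing variation leaves the endpoint $v$-components $a,b$ and the values $\theta(a),\theta(b)$ unchanged, so $\wt\theta_\epsilon$ is a proper variation of $\theta$ and $\tfrac{d}{d\epsilon}\big|_{0}l_{\wt F}(\gamma_\epsilon)=\tfrac{d}{d\epsilon}\big|_{0}\int_a^b\wt L(s,\wt\theta_\epsilon,\partial_s\wt\theta_\epsilon)\,ds=0$ by the Euler--Lagrange equation. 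Both routes rest on the same two ingredients (the action/length identity and reparametrization invariance from positive homogeneity), and both, like the paper, take as given the standard fact that regular critical points of the length functional are exactly reparametrized geodesics. What each buys: the paper's vector-field decomposition avoids any implicit-function-theorem argument and stays within a single first-variation computation; your version is conceptually cleaner, reducing criticality among all curves to criticality among graphs, at the price of the (mild) analytic verification that the graph property and its parametrization vary smoothly with $\epsilon$.
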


\begin{proof}
Observe that we have the following relation between the Lagrangian action $A_{\wt L}$ and the Finsler length $l_{\wt F}$.
\begin{flalign*}
A_{\wt L}(\theta|_{[a,b]}) &= \int_a^b \wt L(t, \theta(t), \theta'(t)) dt \\
&= \int_a^b \wt F(tv + \theta(t)v^\perp, v + \theta'(t)v^\perp) dt \\
&= \int_a^b \wt F(\gamma(t), \dot \gamma(t)) dt \\
&= l_{\wt F}(\gamma|_{[a,b]})
\end{flalign*}

Assume now that $\gamma:[a,b] \to \R^2$ is the reparametrization of an $\wt F$-geodesic, 
i.e. $\partial_{s=0} l_{\wt F}(\gamma_s) =0$ for any
proper variation of $\gamma$. Let $\theta_s : [a,b] \to \R$  be a proper variation of $\theta$. 
From $\theta_s$ we 
construct a proper variation $\gamma_s$ of $\gamma$ via 
$$
\gamma_s(t) = tv + \theta_s(t) v^\perp
$$ 
Then we have $A_{\wt L}(\theta_s) = l_{\wt F}(\gamma_s)$ for every $s$, and hence we have 
$$
\partial_s|_{s=0} A_{\wt L}(\theta_s) = \partial_s|_{s=0}l_{\wt F}(\gamma_s) = 0.
$$
This proves one direction.
To prove the other direction assume now that $\theta: [a,b] \to \R$ is critical with respect to the Lagrangian action and 
let $X: [a,b] \to \R^2$ be a vector field along $\gamma$ with $X(a) = X(b) = 0$. Since 
$\dot \gamma(t) = v + \theta'(t) v^\perp$ the pair of vectors $\{\dot \gamma(t) , v^\perp \}$ always forms a basis of $\R^2$. 
Thus, we can rewrite the vector field $X$ as 
$$
X(t) = \underbrace{\lambda(t) \dot \gamma(t)}_{A(t)} + \underbrace{\mu(t) v^\perp}_{B(t)}
$$
for functions $\lambda, \mu$ with $\lambda(a) = \lambda(b) = \mu(a) = \mu(b) = 0$.
Let $\gamma_s$ be a proper variation of $\gamma$ corresponding to the variational vector field $X$ and let $\beta_s$
be a proper variation of $\gamma$ corresponding to $B$, i.e.
$$
\partial_s|_{s=0} \gamma_s(t) = X(t) \quad \text{and} \quad \partial_s|_{s=0} \beta_s(t) = B(t)
$$
Observe that for small $|s|$ the curve $\eta_s: t \mapsto \gamma(t + s\lambda(t))$ is a reparametrization of $\gamma$ 
and hence has length independent of $s$. Thus we have

\begin{flalign*}
0 &= \partial_s|_{s=0} l_{\wt F}(\eta_s) \\
&=  \partial_s|_{s=0} \int_a^b \wt F(\eta_s(t), \partial_t \eta_s(t)) dt \\ 
&=  \int_a^b \partial_s|_{s=0} \wt F(\eta_s(t), \partial_t \eta_s(t)) dt \\ 
&= \int_a^b  \partial_1 \wt F(\eta_0(t), \partial_t \eta_0(t)) \partial_s|_{s=0} \eta_s(t) 
     + \partial_2 \wt F(\eta_0(t), \partial_t \eta_0(t)) \partial_s|_{s=0} \partial_t \eta_s(t) dt \\ 
&= \int_a^b  \partial_1 \wt F(\gamma(t), \partial_t \gamma(t)) \partial_s|_{s=0} \eta_s(t) 
     + \partial_2 \wt F(\gamma(t), \partial_t \gamma(t)) \partial_t  \partial_s|_{s=0} \eta_s(t) dt \\ 
&= \int_a^b  \partial_1 \wt F(\gamma(t), \partial_t \gamma(t)) A(t)
     + \partial_2 \wt F(\gamma(t), \partial_t \gamma(t)) \dot A(t) dt 
\end{flalign*}

Hence it follows that 

\begin{flalign*}
\partial_s|_{s=0} l_{\wt F}(\gamma_s) &= \partial_s|_{s=0} \int_a^b \wt F(\gamma_s(t), \dot \gamma_s(t)) dt \\
&= \int_a^b \partial_1 \wt F(\gamma_0(t), \dot \gamma_0(t)) \partial_s|_{s=0} \gamma_s(t) 
+ \partial_2 \wt F(\gamma_0(t), \dot \gamma_0(t)) \partial_s|_{s=0} \partial_t \gamma_s(t) \\
&= \int_a^b \partial_1 \wt F(\gamma(t), \dot \gamma(t)) \partial_s|_{s=0} \gamma_s(t) 
+ \partial_2 \wt F(\gamma(t), \dot \gamma(t)) \partial_t \partial_s|_{s=0} \gamma_s(t) \\
&= \int_a^b \partial_1 \wt F(\gamma(t), \dot \gamma(t)) X(t) 
+ \partial_2 \wt F(\gamma(t), \dot \gamma(t)) \dot X(t) \\
&= \int_a^b \partial_1 \wt F(\gamma(t), \dot \gamma(t)) (A(t) + B(t))
+ \partial_2 \wt F(\gamma(t), \dot \gamma(t)) (\dot A(t) + \dot B(t)) \\
&= \int_a^b \partial_1 \wt F(\gamma(t), \dot \gamma(t)) B(t)
+ \partial_2 \wt F(\gamma(t), \dot \gamma(t)) \dot B(t) \\
&= \partial_s|_{s=0} l_{\wt F}(\beta_s)
\end{flalign*}

Consequently, the curve $\gamma$ is critical with respect to the Finsler length if
$$
\partial_s |_{s=0} l_{\wt F}(\beta_s) = 0
$$
for every proper variation $\beta_s$, which varies $\gamma$ only in $v^\perp$ direction, i.e. $\beta_s$ is of the form
$$
\beta_s(t) = tv + \theta_s(t) v^\perp
$$
For those variations we have already computed that if $\theta_s$ is critical with respect to the $\wt L$-action then 
$\beta_s$ is critical with respect to the Finsler length.

\end{proof}

Remember that $T\R^2 \cong \R^2 \times \R^2$.
To establish a connection between return maps of the geodesic flow and the time-1 map
of the Lagrangian $\wt L$ we define the following sets

$$
\wt V = \{(x,w) \in T\R^2 : \langle w,v \rangle > 0, \wt F(w) = 1\}
$$

with sections $\wt V_i$, where $i \in \Z$, given by

$$
\wt V_i = \{(iv + bv^\perp, w) \in T\R^2 : \langle w, v \rangle > 0, \wt F(w) = 1, b \in \R\}
$$ 

In general the Euler-Lagrange solutions of $\wt L$ will not exist for all times. However, because
of theorem \ref{schrthm} we have the following

\begin{prop}
If the Euler-Lagrange flow of $\wt L$ is complete then the set $\wt V$ is invariant and every orbit
starting in a section $\wt V_i$ will consecutively pass through the sections $\wt V_{i+j}$ for $j \in \N$. 
\end{prop}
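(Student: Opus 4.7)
The plan is to use Theorem \ref{schrthm} to identify each orbit of the geodesic flow in $\wt V$ with a unique Euler-Lagrange solution of $\wt L$, so that completeness of the latter yields everything we need. Given $(x_0, w_0) \in \wt V_i$, I write $x_0 = iv + b_0 v^\perp$ and decompose $w_0 = \alpha v + \beta v^\perp$, where $\alpha > 0$ by the defining condition $\langle w_0, v\rangle > 0$. I then take $\theta : \R \to \R$ to be the Euler-Lagrange solution of $\wt L$ with $\theta(i) = b_0$ and $\theta'(i) = \beta/\alpha$, which is defined on all of $\R$ by the completeness assumption, and set $\gamma(t) = tv + \theta(t)v^\perp$. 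Theorem \ref{schrthm} says that $\gamma$ is a reparametrization of an $\wt F$-geodesic; since $\gamma(i) = x_0$ and $\dot\gamma(i) = (1/\alpha)w_0$ is a positive multiple of $w_0$, $\gamma$ must be the reparametrization of the unique geodesic $c$ with initial data $(x_0, w_0)$.

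For invariance of $\wt V$, I write $c(s) = \gamma(\tau(s))$ for the orientation-preserving arclength reparametrization $\tau$. Then $\dot c(s) = \tau'(s)\,\dot\gamma(\tau(s))$ with $\tau'(s) > 0$ and $\langle \dot\gamma(t), v\rangle = |v|^2 > 0$, so $\langle \dot c(s), v\rangle > 0$ throughout the existence of $c$; together with $\wt F(\dot c(s)) = 1$, this gives $(c(s), \dot c(s)) \in \wt V$. For the statement about sections, $\gamma(i+j) \in (i+j)v + \R v^\perp$ lies in the affine line cutting out $\wt V_{i+j}$, and its velocity still has positive $v$-component, so at the arclength parameter $s(i+j)$ the orbit of $(x_0, w_0)$ lies in $\wt V_{i+j}$, and the monotonicity of $s$ ensures these visits occur consecutively as $j$ increases.

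The main technical point to verify is that the arclength function $s(t) = \int_i^t \wt F(\dot\gamma(\sigma))\,d\sigma$ is an unbounded increasing bijection $\R \to \R$, so that $c$, and with it the unit-tangent-bundle orbit, is defined at the arclength times $s(i+j)$ for every $j \in \N$. This reduces to a uniform positive lower bound on $\wt F(v + \theta'(t)v^\perp)$: the continuous function $r \mapsto \wt F(v + rv^\perp)$ is strictly positive on all of $\R$ and blows up as $|r|\to\infty$ by positive homogeneity of $\wt F$, hence attains a strictly positive global minimum, which gives the required bound and completes the argument.
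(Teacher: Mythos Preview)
Your argument is correct and follows the same route as the paper's: both convert the geodesic through $(x_0,w_0)$ into the graph $t\mapsto tv+\theta(t)v^\perp$ of a globally defined Euler--Lagrange solution via Theorem~\ref{schrthm}, and then read off $\langle\dot c,v\rangle>0$ and the section crossings directly from $\langle\dot\gamma,v\rangle=\|v\|^2$. One small remark: to establish invariance of all of $\wt V$ you should begin with an arbitrary $(x_0,w_0)\in\wt V$, writing $x_0=av+b_0v^\perp$ with $a\in\R$, rather than only with points of $\wt V_i$; your argument goes through verbatim with $a$ in place of $i$. Your extra verification that the arclength parameter $s(t)$ is surjective onto $\R$ is more than the paper does and in fact unnecessary, since the geodesic flow of the $\Z^2$-periodic metric is automatically complete; note also that $\wt F$ depends on the base point, but periodicity makes your uniform lower bound valid.
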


\begin{proof}
Completeness of the Euler-Lagrange flow means that the Euler-Lagrange solutions of $\wt L$ exist
for all times. Let $(x,w) \in \wt V$.
We write $(x,w) = (av + bv^\perp, rv + hv^\perp)$ for coefficients $a,b,h,r \in \R$ with $r > 0$. Let 
$\wt c: \R \to \R^2$ be the geodesic with $\wt c(a) = x$ and $\dot{ \wt c}(a) = \tfrac{1}{r}w$. To show
that $\wt V$ is invariant we have to show that $\langle \dot{\wt c}(t) , v \rangle > 0$ for every 
$t \in \R$. 
Let $\theta: \R \to \R$ be the Euler-Lagrange solution of $\wt L$ with 
$\theta(a) = b$ and $\theta'(a) = \tfrac{h}{r}$. Since $\theta$ exists for all times it follows from 
Theorem \ref{schrthm} that the curve $\gamma: \R \to \R^2$ with 
$\gamma(t) = tv + \theta(t)v^\perp$ is a reparametrization of a geodesic. 
We have $\gamma(a) = av + bv^\perp = x$ and $\dot \gamma(a) = v + \tfrac{h}{r}v^\perp = 
\tfrac{1}{r}w$ and hence $\gamma$ is a reparametrization of the geodesic $\wt c$. 
Since $\langle \dot \gamma(t) , v \rangle = \Vert v \Vert^2$ for every $t$, we have 
$\langle \dot{\wt c }(t), v \rangle > 0$ for every $t$.
With the same construction one sees that if $(x,w)$ was chosen in $\wt V_i$ the constructed 
reparametrization $\gamma$ will pass through $\wt V_{i+j}$ at times $i + j$.
\end{proof}

By $\wt R_0 : \wt V_0 \to \wt V_1$ we denote the map that maps an element $(x,w) \in \wt V_0$
to the point of intersection of $\wt V_1$ with the orbit passing through $(x,w)$. We define the projected
section $V \subset ST^2$ via 
$$
V := \Pi(V_0)
$$ 
Since the geodesic
flow on $T\R^2$ is $\Z^2$-invariant, the map $\wt R_0$ descends to a map 
$R: V \to V$, which is the $n$-th return map of the section $V$, with respect to the geodesic flow on $ST^2$. Here, $n-1$ is equal
to $\Vert v \Vert^2 - 1$ ($n-1$ is equal to the number of deck-transformations of $\R v^\perp$, that lie
in between $\R v^\perp$ and $v + \R v^\perp$, which corresponds to the number of integer points
in the rectangle spanned by $v$ and $v^\perp$. If $v$ is a prime element in $\Z^2$ then this is 
equal to $\Vert v \Vert^2 - 1$ after Pick's theorem).

\begin{prop} \label{conjugateprop}
If the Euler-Lagrange flow of $\wt L$ is complete
the return map $R$ is conjugated to the time-1 map $\varphi_L^{0,1}: Z \to Z$ of the 
Lagrangian $L$.
\end{prop}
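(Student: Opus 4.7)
The plan is to build an explicit conjugating diffeomorphism by reading off, from a unit Finsler vector in the section, the initial data of the Euler-Lagrange solution of $\wt L$ furnished by Theorem~\ref{schrthm}. For $(x,w) \in \wt V_0$ I would write $x = bv^\perp$ and $w = rv + hv^\perp$ with $r > 0$, and set $\wt \Phi_0(x,w) := (b, h/r) \in T\R$. Strong convexity of $\wt F$ guarantees that for any prescribed slope $s \in \R$ the positive factor $r(s)$ normalizing $\wt F(r(v + sv^\perp)) = 1$ is unique and depends smoothly on $s$, so $\wt \Phi_0$ is a diffeomorphism onto $T\R$; the same formula defines diffeomorphisms $\wt \Phi_i: \wt V_i \to T\R$ by $(iv + bv^\perp, rv + hv^\perp) \mapsto (b, h/r)$.

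Next I would verify the conjugation in the universal cover. Given $(x,w) \in \wt V_0$ with $\wt \Phi_0(x,w) = (b, h/r)$, let $\theta$ be the Euler-Lagrange solution of $\wt L$ with $\theta(0) = b$ and $\theta'(0) = h/r$; completeness ensures $\theta(1)$ exists. By Theorem~\ref{schrthm}, $\gamma(t) = tv + \theta(t)v^\perp$ reparametrizes the $\wt F$-geodesic through $(x,w)$, so its orbit meets $\wt V_1$ at $\gamma(1) = v + \theta(1)v^\perp$ with unit Finsler velocity $w' = \dot\gamma(1)/\wt F(\dot\gamma(1))$. Writing $w' = r'v + h'v^\perp$ one reads off $h'/r' = \theta'(1)$, whence
\[
\wt \Phi_1(\wt R_0(x,w)) = (\theta(1), \theta'(1)) = \varphi_{\wt L}^{0,1}(\wt \Phi_0(x,w)).
\]

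Finally I would pass to the torus quotient. Since $v$ is primitive in $\Z^2$, so is $v^\perp$, and the stabilizer of $\wt V_0$ under the $\Z^2$-action on $T\R^2$ equals $\Z v^\perp$, acting by $b \mapsto b + q$ for $q \in \Z$; translation by $v \in \Z^2$ identifies $\wt V_0$ with $\wt V_1$, so that $\Pi(\wt V_0) = \Pi(\wt V_1) = V$. Under $\wt \Phi_0$ and $\wt \Phi_1$ the translations by $\Z v^\perp$ become integer shifts in the first coordinate of $T\R$, and both maps descend to the same diffeomorphism $\Phi: V \to TS^1$, which I identify with the phase space $Z$ of $L$. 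Both $\wt R_0$ and $\varphi_{\wt L}^{0,1}$ are equivariant with respect to these translations, so the cover-level identity descends to $\Phi \circ R = \varphi_L^{0,1} \circ \Phi$. The principal obstacle is the bookkeeping of the Finsler normalization and the compatibility of the $\Z^2$-quotients on the two sections; once this is in place, the conjugation is an immediate consequence of Theorem~\ref{schrthm}.
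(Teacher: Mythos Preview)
Your proposal is correct and follows essentially the same route as the paper: the paper factors the conjugating diffeomorphism as $\wt l_i \circ (\wt g|_{\wt W_i})^{-1}$ through an auxiliary scaled section $\wt W_i = \{(iv + bv^\perp, v + dv^\perp)\}$, and one checks directly that this composite is exactly your map $\wt\Phi_i\colon (iv + bv^\perp, rv + hv^\perp) \mapsto (b, h/r)$. One small remark: the uniqueness and smoothness of the normalizing factor $r$ come from positive homogeneity and smoothness of $\wt F$ away from the zero section (rather than strong convexity), and $r$ depends on the basepoint $b$ as well as on the slope $s$.
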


\begin{proof}
Let $(x,w) \in \wt V_0$. We can rewrite $(x,w)$ as 
$$
(x,w) = \left(\frac{\langle x, v^\perp \rangle }{\Vert v \Vert^2} v^\perp , 
\frac{\langle w, v \rangle}{\Vert v \Vert^2} v + \frac{\langle w, v^\perp \rangle}{\Vert v \Vert^2}
v^\perp \right)
$$
Define real numbers $b,h \in \R$ and $r > 0$, such that
$(x,w) = (bv^\perp , rv + hv^\perp)$.
Since the Euler-Lagrange flow is complete there
exists an Euler-Lagrange solution $\theta: \R \to \R$ of $\wt L$ with 
$$
\theta(0) = b \quad \text{and} \quad \theta'(0) = \frac{h}{r}
$$
From theorem \ref{schrthm} it follows that there exists a reparametrization 
$$
\gamma(t) = tv + \theta(t) v^\perp
$$
of an $\wt F$-geodesic. It follows then that
$$
\wt R_0(x,w) = \left( \gamma(1) , \frac{\dot \gamma(1)}{\wt F(\gamma(1), \dot \gamma(1))}
\right)
$$
We define the scaled sections $\wt W_i \subset T\R^2$ for $i \in \Z$ via
$$
\wt W_0 = \{(bv^\perp, v + dv^\perp) \in T\R^2 : b,d \in \R\} \quad \text{and} \quad \wt W_i
= iv + \wt W_0 
$$
together with a map $\wt g : T\R^2 - 0 \to T\R^2 - 0$ with
$$
\wt g(x,w) = \left( x, \frac{w}{\wt F(x,w)} \right)
$$
Observe that the restrictions $\wt g|_{\wt W_i} : \wt W_i \to \wt V_i$
are diffeomorphisms (check that $(x,w) \mapsto (x, \tfrac{\Vert v \Vert^2}{\langle v, w \rangle}w)$ is the inverse
of $\wt g|_{\wt W_i}$ and both are differentiable) and that 
$$
 \left( \wt g|_{\wt W_1} \right)^{-1} \circ \wt R_0 \circ \wt g|_{\wt W_0 } = \wt P_0
$$
for the map $\wt P_0: \wt W_0 \to \wt W_1$ with 
$$
\wt P_0(\gamma(0), \dot \gamma(0)) = (\gamma(1), \dot \gamma(1))
$$
Define diffeomorphisms $\wt l_i : \wt W_i \to \R^2$ via 
$$
\wt l_i (x,w) = \left( \frac{\langle x, v^\perp \rangle}{\Vert v \Vert^2}, 
\frac{\langle w, v^\perp \rangle}{\Vert v \Vert^2}\right)
$$
Then we have 
$$
\wt l_1 \circ \wt P_0 \circ \left( \wt l_0 \right)^{-1} = \varphi_{\wt L}^{0,1}
$$
The conjugacy of the return map $R$ and the time-1 map $\varphi_L^{0,1}$ follows
from the $\Z^2$-invariance of the geodesic flow of $\wt F$.
\end{proof}

\begin{exmp}
Let $\wt F$ be the flat metric, i.e.
$$
\wt F(x,w) = \Vert w \Vert = \sqrt{\langle w, w \rangle}
$$
and $v \in \Z^2 \backslash \{0\}$ prime. The Lagrangian $\wt L$ (see definition \ref{lagrangiandefn})
is then given by 
\begin{flalign*}
\wt L (t,x,r) &= \wt F(tv + xv^\perp, v + rv^\perp) \\
&= \Vert v \Vert \sqrt{1 + r^2}
\end{flalign*}
We obtain that a function $\theta: \R \to \R$ is an Euler-Lagrange solution if and only if
$$
\underbrace{\partial_x \wt L(t,\theta(t), \theta'(t))}_{=0} - \partial_t (\partial_r \wt L(t,\theta(t), \theta'(t))) 
= 0 
$$
which is equivalent to 
$$
\Vert v \Vert \frac{\theta''}{\sqrt{1 + \theta'^2}^3} = 0
$$
and thus $\theta$ is an Euler-Lagrange solution if and only if $\theta''(t) = 0$ for every $t \in \R$.
Consequently the solutions are of the form 
$$
\theta(t) = at + b \quad \text{for } a,b \in \R
$$

Thus, we have for the time-1 map $\varphi_{\wt L}^{0,1}: \R^2 \to \R^2$ of the Lagrangian $\wt L$
$$
\varphi_{\wt L}^{0,1}(x,y) = (x+y,y)
$$

To determine the map $\wt R_0$ let $(x,w) \in \wt V_0$, i.e. 
$(x,w) = (bv^\perp, rv + hv^\perp)$ for $b,h \in \R$ and $r > 0$, such that $\Vert rv + hv^\perp
\Vert = 1$. The Euler-Lagrange solution $\theta$ with $\theta(0) = b$ and $\theta'(0) = \tfrac{h}{r}$
is given by
$$
\theta(t) = \frac{h}{r}t + b
$$
This yields the following reparametrization $\gamma$ of an $\wt F$-geodesic
$$
\gamma(t) = tv + \left( \frac{h}{r}t + b\right) v^\perp
$$
We calculate 
\begin{flalign*}
\wt R_0(x,w) &= \left( \gamma(1), \frac{\dot \gamma(1)}{\Vert \dot \gamma(1) \Vert} \right) \\
&= \left( v + (\tfrac{h}{r} + b)v^\perp , \frac{v + \tfrac{h}{r}v^\perp}{\Vert v+\tfrac{h}{r}v^\perp\Vert} \right) \\
&= \left( x + \frac{1}{r}w, \frac{w}{\Vert w \Vert}\right) \\
&= \left( x + \frac{\Vert v \Vert^2}{\langle w, v \rangle}w, w\right)
\end{flalign*}

To determine the map $\wt P_0$ recall that 
$$
\wt P_0 = \left( \wt g |_{\wt W_1} \right)^{-1} \circ  \wt R_0 \circ  \wt g|_{\wt W_0}
$$
with
$$
\wt g|_{\wt W_i}(x,w) = \left(x, \tfrac{w}{\Vert w \Vert}\right) \quad \text{and}\quad \left( \wt g|_{\wt W_i}\right)^{-1}(x,w) = \left(x, \tfrac{\Vert v \Vert^2}{\langle w, v\rangle}w\right)
$$

For $(x,w) \in \wt W_0$ we obtain 
\begin{flalign*}
\wt P_0(x,w) &= \left( \wt g |_{\wt W_1} \right)^{-1} \circ  \wt R_0 \left(x, \tfrac{w}{\Vert w \Vert}\right) \\
&= \left( \wt g |_{\wt W_1} \right)^{-1} \left(x + \tfrac{\Vert v \Vert^2}{\langle w,v \rangle}w, \tfrac{w}{\Vert w \Vert}\right)\\
&=\left( x + \frac{\Vert v \Vert^2}{\langle w,v \rangle}w, \frac{\Vert v \Vert^2}{\langle w,v \rangle}w \right) \\
&= (x + w, w)
\end{flalign*}
with the last equation following from $(x,w) \in \wt W_0$.
To conclude the example we calculate further 
\begin{flalign*}
\wt l_1 \circ \wt P_0 \circ (\wt l_0)^{-1} (x,y) &= \wt l_1 \circ \wt P_0(xv^\perp, v + yv^\perp) \\
&= \wt l_1(xv^\perp + v + yv^\perp, v + yv^\perp) \\
&= (x+y,y)
\end{flalign*}
with the last line being equal to the time-1 map of $\wt L$.
\end{exmp}

In this example the time-1 map of $\wt L$ is a twist map (the so-called \emph{shear map}).
We will see later that in general the strict convexity of $L$ guarantees that the time-1 map 
$\phi_L^{0,1}$ is conjugated to a finite composition of positive twist maps. This will allow us to apply the 
theory of monotone twist maps to return maps of the geodesic flow.


\subsection{Twist maps of the cylinder and Mather's theorem}

In this section we will briefly recall some of the theory on finite compositions of monotone twist maps of the 
cylinder and Mather's theorem about connecting orbits in Birkhoff regions of instability.

\begin{defn}
Let $f: Z \to Z$ be a map of the cylinder with a lift $F: \R^2 \to \R^2$, such that
$F(x,y) = (X(x,y),Y(x,y))$ is a diffeomorphism with
\begin{enumerate}
\item $F$ is isotopic to the identity
\item Twist condition: The map $(x,y) \mapsto (x, X(x,y))$ is a diffeomorphism for every $y$
\item Exact symplectic: $F^*(\lambda) - \lambda = dS$ for some function $S: Z \to \R$.
\end{enumerate}
Then $f : Z \to Z$ is called a \emph{monotone twist map of the cylinder.} The map $f$ is called \emph{positive} 
if $\partial_y X > 0$ everywhere. Here, $\lambda = ydx$ is the Liouville form on $\R^2$. 
\end{defn}

For further reading on twist maps we refer the reader to \cite{gole}. The following definitions
go along the lines of \cite{mather91}.
The following theorem (originally due to Birkhoff \cite{birkhoff20}) can be found in the Appendix of \cite{mather91} for 
maps $f \in \Pone$. 

\begin{thm} \label{twist-lipschitz}
Let $f \in \mathcal{P}^1$. Any $f$-invariant homotopically non-trivial Jordan curve (homeomorphic to $S^1$) 
$\Gamma$ in the infinite cylinder is the graph of a Lipschitz function $u: S^1 \to \R$. 
\end{thm}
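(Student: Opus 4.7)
I would follow Birkhoff's classical two-step argument: first show that $\Gamma$ projects injectively onto the base circle $S^1$, so that $\Gamma$ is the graph of a continuous function $u$; then upgrade $u$ to Lipschitz. Both steps rest on the interplay between the twist condition (which produces a horizontal shear) and exact symplecticity (area preservation). The intuition is that the twist forces a ``non-graph cap" to drift in a definite horizontal direction, while area preservation forbids such drift from continuing indefinitely inside a region of bounded area.

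For the graph property I argue by contradiction. Since $\Gamma$ is an essential Jordan curve in the cylinder, it separates the cylinder into two unbounded, $f$-invariant components $U^+$ and $U^-$. If $\Gamma$ is not a graph, one can find a vertical segment $\sigma$ with endpoints on $\Gamma$ whose interior lies in one component, say $U^+$; together with an arc of $\Gamma$, $\sigma$ bounds a topological disk $D$ of positive finite area. A positive twist factor of $f \in \Pone$ sends $\sigma$ to a strictly $x$-monotone curve lying to one side of $\sigma$, so $f(D)\neq D$ and the ``formerly vertical" part of the boundary has drifted in a definite horizontal direction. Since $\Gamma$ is invariant, each $f^n(D)$ shares the $\Gamma$-arc with $D$ and has the same area; the monotone drift eventually forces two iterates to overlap nontrivially, producing an intersection of $\Gamma$ with itself away from $\Gamma$ and thus the desired contradiction.

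For the Lipschitz bound, write $\Gamma=\{(x,u(x)):x\in S^1\}$ and suppose $u$ is not Lipschitz. Pick sequences $x_n<x_n'$ with $\delta_n:=x_n'-x_n\to 0$ and $h_n:=u(x_n')-u(x_n)$ satisfying $|h_n|/\delta_n\to\infty$. A quantitative form of the positive twist yields a constant $\kappa>0$ with $X(x,y_2)-X(x,y_1)\ge\kappa(y_2-y_1)$; combined with the Lipschitz dependence of $X$ on $x$ along the compact invariant set $\Gamma$, the horizontal separation of $f(x_n,u(x_n))$ and $f(x_n',u(x_n'))$ is bounded below by an expression of the form $c_1|h_n|-c_2\delta_n$ which stays bounded away from zero. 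Both images lie on the $f$-invariant graph $\Gamma$, so a subsequential limit produces two distinct points of $\Gamma$ with the same first coordinate, contradicting the graph property from Stage~1 and giving an effective Lipschitz constant for $u$.

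The principal obstacle is Stage~1: converting the purely topological hypothesis ``not a graph" into a quantitative drift that violates area preservation. One must choose $\sigma$ so the twist acts in a controlled direction, rule out caps on both sides simultaneously, and handle the delicate case where the iterates $f^n(D)$ accumulate on $\Gamma$ without ever crossing it transversally. This is precisely the content of the positively/negatively tilted graph machinery in Mather's appendix, which I would mimic to make the overlap argument rigorous.
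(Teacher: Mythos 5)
A point of comparison first: the paper does not prove this statement at all --- it is quoted as a known result, originally due to Birkhoff, with the version for $f\in\Pone$ taken from the appendix of Mather's paper \cite{mather91}. So there is no in-paper proof to measure you against; what you have written is an outline of the classical Birkhoff argument, and it has genuine gaps at exactly the places you yourself flag.

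The central difficulty you underestimate is that $f\in\Pone$ is a finite composition of positive twist maps and is in general \emph{not} itself a twist map, while both of your stages quietly use twist-map properties of $f$ itself. In Stage 1 you apply ``a positive twist factor'' to the vertical segment $\sigma$, but the individual factors do not preserve $\Gamma$, so only the full composition is relevant; moreover the claim that each $f^n(D)$ ``shares the $\Gamma$-arc with $D$'' is false ($f^n(D)$ is bounded by $f^n(\sigma)$ and by $f^n$ of the original arc, which is a different subarc of $\Gamma$), and an overlap $f^n(D)\cap f^m(D)\neq\emptyset$ is not by itself a contradiction, since $D$ is not invariant and such overlaps do not force a self-intersection of $\Gamma$. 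In Stage 2 the quantitative twist $X(x,y_2)-X(x,y_1)\ge\kappa\,(y_2-y_1)$ holds on compact sets for a single positive twist map, but it is unjustified for the composition $f$: the half-plane of vectors with positive horizontal component is not invariant under the derivatives of the factors, so the uniform horizontal separation you need does not follow by this route. These are precisely the points where Mather's appendix replaces the naive argument by the positively/negatively tilted-graph machinery, and your proposal explicitly defers to that machinery rather than reproducing it; as a standalone proof it is therefore incomplete, although citing the result, as the paper itself does, is of course legitimate.
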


By $\Pone$ we denote the set of finite compositions of positive monotone twist maps of the cylinder.

\begin{defn}
Let $f \in \Pone$. A \emph{Birkhoff region of instability} $B \subset Z$ is a compact $f$-invariant subset, such that
\begin{enumerate}
\item The boundary of $B$ consists of two components $\Gamma_-$ and $\Gamma_+$, each homeomorphic to a circle
and each non-contractible in $Z$
\item If $\Gamma \subset B$ is any $f$-invariant subset, homeomorphic to a circle, non-contractible, then 
$\Gamma = \Gamma_-$ or $\Gamma = \Gamma_+$
\end{enumerate}
\end{defn}

For every $f \in \Pone$ there is a variational principle $h$ (cf \cite{mather91} chapter 1) 
and consequently one can define the notion of a minimal orbit of
$f$. Much like if $f$ is a twist map one can prove that  every minimal orbit has a 
rotation number. This let's us define the set $\M_{f, \omega}$ of minimal orbits of $f$ with
rotation number $\omega$. 
Let $\Gamma \subset Z$ be an $f$-invariant circle (and consequently after theorem \ref{twist-lipschitz} a Lipschitz graph) with rotation number $\omega$. It 
follows from \cite{mather91} proposition 2.8 that $\Gamma \subset \M_{f,\omega}$. If $\omega$ is 
irrational it follows from \cite{mather91} proposition 2.6 that the projection $\pi: Z \to S^1$ restricted to 
$\M_{f,\omega}$ is injective, and hence we have $\Gamma = \M_{f,\omega}$.\\

A special case of theorem 4.1 in \cite{mather91} is the following connecting theorem by J. Mather

\begin{thm} \label{mather1}
Let $f \in \Pone$. If $f$ has a Birkhoff region of instability $B$, such that the rotation numbers 
$\omega_- < \omega_+$ corresponding to the boundary graphs 
$\Gamma_-$ (respectively $\Gamma_+$) are irrational, then there exists an $f$-orbit $\mathcal{O}$
that is $\alpha$-asymptotic to $\Gamma_-$ and $\omega$-asymptotic to $\Gamma_+$.
\end{thm}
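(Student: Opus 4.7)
The plan is to produce the connecting orbit variationally, following Mather's approach in \cite{mather91}. Associated to any $f\in\Pone$ is a variational principle $h$ whose critical configurations $(x_i)_{i\in\Z}$ correspond to $f$-orbits. Because $\omega_\pm$ are irrational, the propositions of \cite{mather91} recalled just above identify each boundary graph $\Gamma_\pm$ with the full minimal set $\M_{f,\omega_\pm}$, and the definition of a Birkhoff region of instability guarantees that $\Gamma_-$ and $\Gamma_+$ are the only non-contractible invariant circles contained in $B$.

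First I would fix orbits $p^\pm\subset\Gamma_\pm$ and, for each large $N$, minimize
\[
\sum_{i=-N}^{N-1} h(x_i,x_{i+1})
\]
over configurations $(x_i)_{-N\le i\le N}$ whose past end is glued to $p^-$ at time $-N$ and whose future end is glued to $p^+$ at time $N$. A minimizer exists by the coercivity of $h$ on compact pieces of the cylinder, and the variational Euler--Lagrange relation for $h$ forces the interior of the minimizer to trace a genuine $f$-orbit segment. Using compactness of $B$ (into which these minimizers fall by standard comparison with the boundary Aubry--Mather orbits), I would then extract, via a diagonal subsequence and an appropriate time-translation, a bi-infinite limit $\mathcal O = (x_i^*)_{i\in\Z}\subset B$ that is globally action-minimizing with respect to compactly supported variations.

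The remaining and hardest task is to show that $\mathcal O$ is genuinely $\alpha$-asymptotic to $\Gamma_-$ and $\omega$-asymptotic to $\Gamma_+$, rather than accumulating on some intermediate invariant subset of $B$. For this I would analyze the $\alpha$- and $\omega$-limit sets of $\mathcal O$: each is compact, $f$-invariant, and by the Aubry--Mather structure of minimizers for $f\in\Pone$ consists of action-minimizing orbits carrying a well-defined rotation number. The hypothesis that $B$ contains no non-contractible invariant circle other than $\Gamma_\pm$, together with the irrationality of $\omega_\pm$ and the strict positivity of Mather's Peierls barrier off $\Gamma_+$ (respectively $\Gamma_-$), would then force the $\omega$-limit to coincide with $\Gamma_+$ and the $\alpha$-limit with $\Gamma_-$; otherwise one could perform a local modification of $\mathcal O$ to strictly decrease its action, contradicting global minimality. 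The technically most delicate step is making this Peierls-barrier comparison precise for a composition of positive monotone twists in the Birkhoff-instability setting, which is essentially the content of Theorem 4.1 in \cite{mather91}.
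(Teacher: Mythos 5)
Note first that the paper does not actually prove this statement: it is quoted verbatim as a special case of Theorem 4.1 of \cite{mather91}, so the only fair question is whether your sketch would stand as an independent proof. It would not. At the decisive moment you write that the delicate comparison ``is essentially the content of Theorem 4.1 in \cite{mather91}'', i.e.\ you invoke the very theorem to be proved; deferring to Mather is exactly what the paper does, but then the variational construction you wrap around that citation must either be deleted or made to work on its own, and as it stands it cannot.

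The concrete obstruction is your extraction step. A bi-infinite configuration that is minimal with respect to compactly supported variations has a well-defined rotation number; this is part of the Aubry--Mather theory recalled in the paper for $f\in\Pone$ (every minimal orbit has a rotation number, and for irrational $\omega$ the invariant circle equals $\M_{f,\omega}$). An orbit that is $\alpha$-asymptotic to $\Gamma_-$ and $\omega$-asymptotic to $\Gamma_+$ would have backward rotation number $\omega_-$ and forward rotation number $\omega_+>\omega_-$, hence cannot be globally action-minimizing. Consequently the diagonal limit of your glued finite-segment minimizers, which by the usual closedness argument is a minimal configuration, can never be the desired connecting orbit: what really happens is that the transition epoch drifts to $\pm\infty$ along the subsequence and the limit collapses onto a single Aubry--Mather set, so the connecting behavior is lost exactly where you assert it survives. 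This is precisely why Mather works with \emph{constrained} minimization (configurations forced by constraints to exhibit the prescribed asymptotics, with the absence of invariant circles in the interior of $B$ used to show the constraints are not active), and why his connecting orbits are critical but not globally minimal. Your final appeal to strict positivity of the Peierls barrier off $\Gamma_\pm$ does not repair this, since that statement detects the presence or absence of an invariant circle of a \emph{fixed} rotation number and says nothing by itself about heteroclinic drift between the distinct rotation numbers $\omega_-$ and $\omega_+$.
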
 

The orbit of a point $x \in S^1 \times \R$ is called \emph{$\omega$-asymptotic}  
to $\Gamma_+$ if 
$d(f^n(x), \Gamma_+)$ converges to zero as $n \to \infty$. It is called \emph{$\alpha$-asymptotic} to 
$\Gamma_-$ if $d(f^{-n}(x), \Gamma_-)$ converges to zero as $n \to \infty$.\\

The next theorem can be found as theorem 39.1 in \cite{gole} in a more general setting. It allows us to determine
when the time-$(s,t)$ maps of a time-periodic Hamiltonian on $Z$ are twist maps. 

\begin{thm} \label{globalleg}
Let $H: S^1 \times S^1 \times \R \to \R$ be a smooth Hamiltonian, such that the maps
$h_{t,x}: \R \to \R$ with 
$$
h_{t,x}: p \mapsto \partial_p H(t,x,p)
$$
are diffeomorphisms for every $(t,x) \in S^1 \times S^1$ (global Legendre condition). Then, given 
any compact set $K \subset S^1 \times \R$ and starting time $a$, there exists an $\epsilon_0 > 0$, 
such that, for all $0 < \epsilon < \epsilon_0$ the time-$(a, a+\epsilon)$ map of $H$ is a 
twist map on $K$.
\end{thm}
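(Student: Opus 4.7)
The plan is to analyze the time-$(a,a+\epsilon)$ map of $H$, which I will denote $F^\epsilon(x_0,p_0) = (X^\epsilon(x_0,p_0), P^\epsilon(x_0,p_0))$, via Taylor expansion in $\epsilon$, and then verify each of the three conditions in the definition of a monotone twist map on $K$.

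First, I would use smooth dependence of ODE solutions on initial conditions and the parameter $\epsilon$ to establish the expansion $X^\epsilon(x_0,p_0) = x_0 + \epsilon\,\partial_p H(a,x_0,p_0) + O(\epsilon^2)$, with the $O(\epsilon^2)$ remainder uniform over $(x_0,p_0)\in K$. Differentiating in $p_0$ then yields $\partial_{p_0} X^\epsilon = \epsilon\,\partial_{pp}H(a,x_0,p_0) + O(\epsilon^2)$ uniformly on $K$. This is the core computation.

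Second, I would extract a uniform nonvanishing sign of $\partial_{pp}H$ from the global Legendre hypothesis. Since $h_{t,x}(p) = \partial_p H(t,x,p)$ is a diffeomorphism of $\R$, its derivative $\partial_{pp}H(t,x,p)$ is nowhere zero, so it has a fixed sign in $p$ for each $(t,x)$; by continuity and connectedness of $S^1\times S^1\times\R$, a single sign holds globally. Restricting to the compact slice $\{a\}\times K$ then gives a uniform lower bound $|\partial_{pp}H(a,x,p)|\ge c>0$. Combined with the expansion above, this forces $\partial_{p_0}X^\epsilon\ne 0$ on $K$ for all sufficiently small $\epsilon$, with a definite sign --- this is exactly the twist condition, and its sign distinguishes whether we obtain a positive or a negative twist map.

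Third, I would address the remaining two properties. Isotopy to the identity is automatic from the flow itself, since $s\mapsto F^{s\epsilon}$, $s\in[0,1]$, connects $\id$ to $F^\epsilon$ through smooth diffeomorphisms. Exact symplecticity is a general feature of Hamiltonian flows: letting $(x(t),p(t))$ denote the solution through $(x_0,p_0)$, the action $S^\epsilon(x_0,p_0) = \int_a^{a+\epsilon} \bigl[p(t)\dot x(t) - H(t,x(t),p(t))\bigr]\,dt$ serves as a generating function with $(F^\epsilon)^*\lambda - \lambda = dS^\epsilon$, verified by a standard computation using Hamilton's equations. The main obstacle, I expect, will be the clean bookkeeping of uniformity of the $O(\epsilon^2)$ remainders over $K$ together with ensuring the lower bound on $|\partial_{pp}H|$ survives there; once these are in place, the twist condition follows immediately and the other two properties come from standard Hamiltonian theory.
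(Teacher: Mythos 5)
The paper gives no proof of this theorem: it is imported verbatim from Gol\'e's book (Theorem 39.1 in \cite{gole}), so there is no internal argument to compare against. Your proposal is the standard proof of that quoted result and is essentially correct: the expansion $X^\epsilon(x_0,p_0)=x_0+\epsilon\,\partial_pH(a,x_0,p_0)+O(\epsilon^2)$ uniformly on $K$, the observation that each $h_{t,x}$ being a diffeomorphism forces $\partial_{pp}H$ to be nowhere zero and hence (by continuity and compactness) bounded away from zero near $\{a\}\times K$, the flow isotopy to the identity, and the action integral as a generating function for exact symplecticity together give all three properties. Two bookkeeping points to make explicit: first, for small $\epsilon$ the solutions starting in $K$ exist on $[a,a+\epsilon]$ and remain in a fixed compact set (the fiber $\R$ is noncompact), which is what makes the remainder estimates and the lower bound on $|\partial_{pp}H|$ uniform; second, the expansion $\partial_{p_0}X^\epsilon=\epsilon\,\partial_{pp}H(a,x_0,p_0)+O(\epsilon^2)$ should be derived from the variational (linearized) equation or from joint smoothness of the flow in $(\epsilon,x_0,p_0)$, not by formally differentiating a $C^0$ remainder. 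With ``twist map on $K$'' understood, as in Gol\'e and as used later in the paper, to mean the uniform nonvanishing of $\partial_{p_0}X^\epsilon$ on $K$ (rather than a global diffeomorphism condition in the fiber variable), your outline is sound.
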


\section{Proof of the main result}

To prove the main theorem we will define a time-periodic Lagrangian $L$ on $S^1$, 
such that the Euler-Lagrange solutions of $L$ correspond to reparametrizations of $F$-geodesics, 
whose lifts are graphs over the 
euclidean line $e_1 \cdot \R  \subset \R^2$. After a perturbation of the Lagrangian outside a tube
$S^1 \times (-R,R)$ we will be able to work with the conjugated Hamiltonian $H$. 
Theorem \ref{globalleg} will guarantee that the time-1 map $\psi$ of $H$ is in $\Pone$, i.e. 
a composition of finitely many positive monotone twist maps.
We will then see that the instability region of the geodesic flow corresponds to a Birkhoff region 
of instability of $\psi$ and hence Mather's connecting theorem \ref{mather1} will guarantee the 
existence of an orbit connecting the boundary graphs of the Hamiltonian (respectively Lagrangian) system.
Using the correspondence of 
Euler-Lagrange solutions and reparametrizations of geodesics we obtain a connecting geodesic
with the required asymptotic behaviour.\\

Let $v \in \Z \backslash \{ 0 \}$. Like in the previous section we associate
a smooth and 
strictly convex time-periodic Lagrangian $L$ on $S^1$ to the Finsler metric $F$ via 

$$
L(t,x,r) = F\left( tv + xv^\perp, v + rv^\perp\right)
$$

We perturb the Lagrangian for large values of $|r|$:
For $R > 0$ we can find a constant $D > 0$, such that there is an extension $L_R$ of $L|_{S^1 \times S^1 \times (-R,R)}$
to $S^1 \times S^1 \times \R$ with 
\begin{enumerate}
\item $L_R(t,x,r) = \frac{D}{2} r^2$ for large values of $|r|$
\item $L_R$ is strictly convex with second partial derivative 
bounded and bounded away from zero, i.e. 
$0 < \frac{1}{C} < \partial_{rr} L_R < C$ for a constant $C > 0$
\end{enumerate}

\begin{rmk}
Observe that the Euler-Lagrange solutions of $L_R$ for large values of $|r|$ are just straight lines, and hence the integral
curves of the Euler-Lagrange vector field exist for all times. To see this lift $L_R$ to a 1-periodic 
Lagrangian on $\R$ and observe that for large values of $|r|$ the Euler-Lagrange equation 
reduces to $\ddot \gamma = 0$ for functions $\gamma: \R \to \R$. Thus, for large $|r|$ the solutions
are straight lines. Consequently, for
large $C >0$ we have that the compact sets $S^1 \times S^1 \times [-C,C]$ are invariant under the
Euler-Lagrange flow, and hence every solution exists for all times, i.e. the flow is complete.
\end{rmk}

The following theorem is a corollary of theorem \ref{schrthm}.

\begin{thm} \label{schroeder1}
Let $F$ be a Finsler metric on $T^2$ and let $L_R$ be the above Lagrangian for an $R > 0$.
Let $\theta: \R \to \R$ be a smooth function with $-R < \theta' < R$ and let $\gamma: \R \to \R^2$ be the curve given by 
$$
\gamma(t) = t v + \theta(t) v^\perp
$$
Then $\gamma$ is a reparametrization of a lift of an $\wt F$-geodesic if and only if $\theta$ is an Euler-Lagrange solution of
$\wt L_R$.
\end{thm}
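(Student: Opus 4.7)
The plan is to reduce the statement directly to Theorem~\ref{schrthm} by exploiting the fact that $L_R$ is constructed as an extension of $L|_{S^1 \times S^1 \times (-R,R)}$, so the two Lagrangians are \emph{literally identical} on that open strip. The key observation is that under the hypothesis $|\theta'(t)| < R$ for every $t$, the $1$-jet $t \mapsto (t, \theta(t), \theta'(t))$ stays inside this strip. Since the inequality is strict and being inside the strip is an open condition, one can find an open neighbourhood of the jet of $\theta$ in $S^1 \times S^1 \times \R$ (and correspondingly of its lift in $\R \times \R \times \R$) on which $\wt L_R$ and $\wt L$ agree as smooth functions, and hence so do all of their partial derivatives.

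With this in hand, I would next observe that the Euler-Lagrange equation for $\theta$ is a pointwise ODE condition on $(t, \theta(t), \theta'(t))$ involving only $\partial_x \wt L_R$, $\partial_r \wt L_R$, and the total $t$-derivative of $\partial_r \wt L_R$ along the $1$-jet of $\theta$. By the preceding paragraph these agree with the corresponding derivatives of $\wt L$, so $\theta$ satisfies the Euler-Lagrange equation for $\wt L_R$ if and only if it satisfies it for $\wt L$. Applying Theorem~\ref{schrthm} then translates the latter condition into the statement that $\gamma(t) = tv + \theta(t)v^\perp$ is a reparametrization of a lift of an $\wt F$-geodesic, which yields the desired equivalence.

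I do not expect any genuine obstacle in carrying this out; the argument is morally just the tautology that two smooth functions which coincide on an open set have identical partial derivatives there. The one point worth flagging is that the strict bound $|\theta'| < R$ (as opposed to $|\theta'| \le R$) is what guarantees the $1$-jet of $\theta$ lies in the open region of agreement, and this is precisely what is needed to invoke Theorem~\ref{schrthm} verbatim without any boundary considerations.
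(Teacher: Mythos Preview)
Your proposal is correct and matches the paper's own proof essentially verbatim: both argue that since $L_R$ agrees with $L$ on the open strip $S^1 \times S^1 \times (-R,R)$ containing the $1$-jet of $\theta$, the Euler--Lagrange equations coincide there, and then invoke Theorem~\ref{schrthm}. The only difference is that you spell out the ``open neighbourhood / partial derivatives agree'' step more explicitly than the paper does.
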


\begin{proof}
The theorem follows immediately for a curve $\theta: \R \to \R$ with $-R < \theta' < R$ since
the perturbed Lagrangian $L_R$ agrees with the unperturbed Lagrangian $L$ in a 
neighbourhood of the curve. Consequently, the Euler-Lagrange equations agree on this 
neighbourhood and hence $\theta$ is an Euler-Lagrange solution for $L_R$ if and only if 
$\theta$ is an Euler-Lagrange solution for $L$. 

\end{proof}

To define the associated Hamiltonian $H_R: S^1 \times S^1 \times \R \to \R$ of the Lagrangian $L_R$ we need to 
consider the Legendre-transform $\Leg_t : S^1 \times \R \to 
S^1 \times \R$ with
$$
\Leg_t : (x,r) \mapsto (x, \partial_r L(t,x,r))
$$

For a moment we will omit the dependence on $R$ and just write $L$ instead of $L_R$. 
Note that the Legendre-transform is a diffeomorphism because $\partial_{rr}L$ is bounded away from zero. 
For the above Lagrangian $L$ we can define the associated Hamiltonian $H : S^1 \times S^1 \times \R \to \R$
via 
\begin{equation} \label{eq:hamdefn}
H(t,x, \partial_rL(t,x,r)) = \partial_r L(t,x,r) r - L(t,x,r)
\end{equation}
The time-$(s,t)$ maps of the Lagrangian and associated Hamiltonian for $s < t$ are conjugated via the 
Legendre transform 
$$
\psi^{s,t} = \Leg_t \circ \varphi^{s,t} \circ \Leg_s^{-1}
$$

It follows from \eqref{eq:hamdefn} and the strict convexity of $L$ that $\frac{1}{C} < \partial_{pp} H < C$ 
for a constant $C >0$.

\begin{prop}
The time-1 map $\psi:Z \to Z$ of the associated Hamiltonian $H_R: Z \to \R$ of the Lagrangian
$L_R$ is in $\Pone$.
\end{prop}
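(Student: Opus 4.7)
The plan is to apply Theorem \ref{globalleg} to $H_R$ on a large compact subset of $Z$, combine it with the explicit form of $H_R$ at infinity, and decompose $\psi$ into a finite composition of positive monotone twist maps.

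First I would verify the global Legendre condition for $H_R$. Since $\partial_{rr}L_R$ is bounded above and away from zero on $S^1 \times S^1 \times \R$, the Legendre transform $\Leg_t$ is a diffeomorphism for every $t$, and a routine dualisation yields $0 < 1/C' < \partial_{pp}H_R < C'$ on all of $Z$ for some constant $C' > 0$. In particular, for each $(t,x) \in S^1 \times S^1$ the map $p \mapsto \partial_p H_R(t,x,p)$ is strictly increasing and proper, hence a diffeomorphism --- which is precisely the hypothesis of Theorem \ref{globalleg}.

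Next I would decompose the time-1 map. Fix a compact set $K = S^1 \times [-N, N] \subset Z$ with $N$ large. By Theorem \ref{globalleg}, applied uniformly in the starting time $a \in [0,1]$ using compactness of $[0,1]$, there exists $\epsilon_0 > 0$ such that the time-$(a,a+\epsilon)$ map of $H_R$ is a twist map on $K$ for all $0 < \epsilon < \epsilon_0$. Choose a partition $0 = a_0 < a_1 < \cdots < a_m = 1$ with $a_{i+1} - a_i < \epsilon_0$ and write $\psi = \psi^{a_{m-1},1} \circ \cdots \circ \psi^{a_0,a_1}$. Each factor $f_i := \psi^{a_i, a_{i+1}}$ is a diffeomorphism of $Z$ (the flow is complete by the Remark), is isotopic to the identity through $t \mapsto \psi^{a_i, a_i + t(a_{i+1}-a_i)}$, and is exact symplectic, since Hamiltonian time maps satisfy $f_i^*\lambda - \lambda = dS_i$ with $S_i$ the Hamilton action along trajectories.

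The main obstacle is to upgrade the twist condition from the compact set $K$ to the whole cylinder $Z$ and simultaneously establish positivity. For small $\epsilon > 0$ the first-order expansion of the flow gives
\[\partial_p X(x,p) = \epsilon\, \partial_{pp}H_R(a_i,x,p) + O(\epsilon^2),\]
where the error is controlled by the $C^2$-norm of $H_R$ along the orbit. This is precisely where the construction of $L_R$ pays off: outside of $K$ the Hamiltonian reduces to $H_R = p^2/(2D)$, so $\partial_{pp}H_R \equiv 1/D$ and all higher derivatives vanish, making the $O(\epsilon^2)$ remainder uniform in $p$. Combined with the global lower bound $\partial_{pp}H_R \geq 1/C'$, this yields $\partial_p X > 0$ on all of $Z$ for $\epsilon$ small; shrinking $\epsilon_0$ if necessary, each $f_i$ is a positive monotone twist map, and therefore $\psi \in \Pone$.
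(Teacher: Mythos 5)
Your proposal is correct and follows essentially the same route as the paper: verify the global Legendre condition from the two-sided bounds on $\partial_{pp}H_R$, apply Theorem \ref{globalleg} with varying starting times to split $\psi$ into finitely many short-time factors, and exploit the fact that $H_R(t,x,p)=p^2/(2D)$ for large $|p|$ to handle the non-compactness of $Z$. The only (minor) difference is that where the paper observes the factors are literally shear maps of twist $1/D$ outside large invariant tubes, you obtain the global (positive) twist condition via a uniform first-order expansion of $\partial_p X$ --- a slightly more explicit treatment of the same point.
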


\begin{proof}
Since $\partial_{pp} H(t,x,p) \neq 0$
the maps $h_{t,x}$ are local diffeomorphisms. Injectivity follows from $\partial_{pp} H >0$ and 
surjectivity follows from $\partial_{pp} H$ being bounded away from zero.
Consequently, for every large tube $S^1 \times [-P,P]$ (those tubes are actually invariant under the
time-dependent flow of $H$ if we chose $P$ to be large enough) there is an $\epsilon >0$, such that the
restriction
of $\psi_H^{0,\epsilon}$ to the tube is a twist map. Outside those tubes the Lagrangian $L$ reduces 
to $L(t,x,r) = \tfrac{D}{2}r^2$ and hence with the Legendre-transform being equal to
$ \Leg_t(x,r) = (x, Dr)$ we find that for the Hamiltonian $H$ it holds that
$$
H(t,x,y) = \frac{y^2}{2D} \quad \text{for large } |y|
$$
Thus, the Hamiltonian vector field is identical to $X_H(t,x,y) = (\tfrac{y}{D},0)$ outside large tubes
and hence the time-1 map outside large tubes is identical to a shear-map of twist 
$\tfrac{1}{D}$.
Since the starting time $a$ in theorem \ref{globalleg} is arbitrary we can piece the time-1 map
$\psi = \psi_H^{0,1}: S^1 \times \R \to S^1 \times \R$ together from finitely many twist maps,
i.e. there exist real numbers $0 = \epsilon_0 < \dots < \epsilon_n = 1$, such that
$$
\psi = \psi_H^{\epsilon_{n-1}, \epsilon_n }\circ \dots \circ \psi_H^{\epsilon_0, \epsilon_1}.
$$
and $\psi_H^{\epsilon_i, \epsilon_{i+1}}$ is a twist map for every $i$.
\end{proof}

\begin{lem} \label{torustocircle}
If $\Lambda \subset ST^2$ is an invariant torus with bounded direction (with respect to $v$) and
$R>0$ is chosen large enough, then the time-1 map of the Lagrangian $L_R$ 
has a corresponding invariant circle $\Gamma$. 
If there are no closed geodesics on $\Lambda$ the invariant circle has irrational rotation
number. 
\end{lem}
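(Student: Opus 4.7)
The plan is to transport the torus $\Lambda$ through the conjugacy chain from Section 2 to produce a Lipschitz invariant circle $\Gamma$ for the time-1 map of $L_R$, and then to convert hypothetical periodic points on $\Gamma$ into closed geodesics via Theorem \ref{schroeder1}.

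First I would use the bounded direction hypothesis to fix $R$. Every lifted geodesic with velocity in $\wt\Lambda$ is a graph over $\R v$, so after reparametrization it has the form $\gamma(t)=tv+\theta(t)v^\perp$. Since $\Lambda$ is compact and the slope $\theta'$ depends continuously on the velocity, one gets a uniform bound $|\theta'|\le M$ for some $M>0$; choose any $R>M$. With this choice Theorem \ref{schroeder1} applies verbatim, and the $\theta$-functions arising from $\wt\Lambda$ are precisely the Euler--Lagrange solutions of $\wt L_R$ whose derivative stays in $(-R,R)$, which is the regime where $\wt L_R$ and $\wt L$ agree in a neighborhood of the trajectory.

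Next I would construct $\Gamma$. Because $\Lambda$ is the graph of a continuous map $X:T^2\to S^1$ and its direction is transverse to $v^\perp$, the intersection $\wt\Lambda\cap\wt V_0$ is a curve of the form $b\mapsto (bv^\perp,w(b))$ parametrized by $b\in\R$. Pulling this curve back through $(\wt g|_{\wt W_0})^{-1}$, then through $\wt l_0:\wt W_0\to\R^2$, and finally descending by the $\Z$-action of $v^\perp$-translation produces a non-contractible Jordan curve $\Gamma\subset Z=S^1\times\R$ (each intermediate map is a diffeomorphism, so the graph property is preserved). Invariance of $\Gamma$ under $\varphi_{L_R}^{0,1}$ follows from invariance of $\Lambda$ under the geodesic flow together with Proposition \ref{conjugateprop}, which still applies in the perturbed setting because all relevant $\theta$-functions satisfy $|\theta'|<R$. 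That $\Gamma$ (or equivalently its Legendre image, which is invariant under $\psi\in\Pone$) is a Lipschitz graph then follows from Theorem \ref{twist-lipschitz}.

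For the rotation number I would argue by contradiction: suppose $\varphi_{L_R}^{0,1}|_\Gamma$ has rational rotation number $p/q$. Since $\Gamma$ is a Jordan curve and the map on it is an orientation-preserving homeomorphism, there must exist a $q$-periodic point. Lifting this periodic point to $\wt\Lambda\cap\wt W_0$ and unrolling the conjugacy, we obtain an Euler--Lagrange solution of $\wt L_R$ with $\theta(q)=\theta(0)+p$ and $\theta'(q)=\theta'(0)$; by uniqueness of E--L solutions, $\theta(t+q)=\theta(t)+p$ for all $t$. Theorem \ref{schroeder1} then yields a reparametrized geodesic $\gamma(t)=tv+\theta(t)v^\perp$ with $\gamma(t+q)-\gamma(t)=qv+pv^\perp\in\Z^2$, hence a closed geodesic on $T^2$ lying in $\Lambda$, contradicting the hypothesis. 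The main technical issue is ensuring that the graph-over-$T^2$ structure of $\Lambda$ translates correctly into a graph-over-$S^1$ structure for $\Gamma$ so that $\Gamma$ is genuinely a non-contractible circle rather than merely an invariant set; once that is in place, the periodic-point-to-closed-geodesic correspondence is almost tautological through Theorem \ref{schroeder1}.
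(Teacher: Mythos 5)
Your proposal is correct and follows essentially the same route as the paper: fix $R$ beyond a uniform slope bound coming from compactness and bounded direction, realize $\Gamma$ as the (position, slope) data of $\Lambda$ over the line $\R v^\perp$, get invariance under $\varphi_{L_R}^{0,1}$ from the geodesic/Euler--Lagrange correspondence of Theorem \ref{schroeder1}, and derive a closed geodesic in $\Lambda$ from a periodic point when the rotation number is rational. The only difference is presentational: the paper builds a reparametrized flow $\hat\phi$ on $\Lambda$ by an explicit time change and checks the bookkeeping by hand, whereas you transport the section $\wt\Lambda\cap\wt V_0$ through the conjugacy of Proposition \ref{conjugateprop} (legitimately, since the relevant orbits stay in the region where $L_R=L$), yielding the same circle $\Gamma$.
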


\begin{rmk}
By "Corresponding" in lemma \ref{torustocircle} we mean that every Euler-Lagrange solution 
$t \mapsto (\theta_s(t), \partial_t \theta_s(t))$ of $\wt L_R$ with 
$$
(\theta_s(0), \partial_t\theta_s(0)) = (s, \partial_t \theta_s(0)) \in \Gamma
$$
corresponds to a reparametrization $\gamma_s$ of the geodesic in $\Lambda$ with $\gamma_s(0) = sv^\perp$.
\end{rmk}

\begin{proof} \emph{(of lemma \ref{torustocircle})}
It is useful to see $\Lambda$ as a $\Z^2$-periodic continuous graph in 
$S\R^2 \cong \R^2 \times S^1$,
which is invariant under the geodesic flow $\phi : S\R^2 \times \R \to S\R^2$ of the lifted Finsler
metric. 
First we want to define a reparametrization of the restricted geodesic flow 
$\phi: \Lambda \times \R \to \Lambda$ on $\Lambda$.
Since the projections of the geodesics on $\Lambda$ are graphs over $v\R$ and because of the periodicity of $\Lambda$,
we have 
\begin{equation} \label{eq:graph1}
\langle \pi_2 \circ \phi(w,t) , v \rangle \geq const > 0
\end{equation}
for every $(w,t) \in \Lambda \times \R$. 
We define a smooth map $k: S\R^2 \times \R \to S\R^2 \times \R$ via 
$$
k(w,t) = \left(w, \int_0^t \langle \pi_2 \circ \phi(w,s), v \rangle ds \right)
$$

It follows from \eqref{eq:graph1} that $k$ is a local diffeomorphism in a neighbourhood of every $(w,t) \in \Lambda 
\times \R$. Furthermore, it follows also from \eqref{eq:graph1} that the restriction of $k$ to $\Lambda \times \R$ is 
bijective and consequently the map $k: \Lambda \times \R \to \Lambda \times \R$ is a homeomorphism. Hence, 
we have the inverse $k^{-1} : \Lambda \times \R \to \Lambda \times \R$, which is continuous and differentiable 
in the second component. 
We define a reparametrization $\hat \phi : \Lambda \times \R \to \Lambda$ of the geodesic flow via 
$$
\hat \phi (w,t) = \phi \circ k^{-1} (w, \Vert v \Vert^2 t)
$$

Note, that since the graph $\Lambda$ is $\Z^2$-periodic, we have for $z \in \Z^2$
\begin{equation} \label{eq:graphinvariance}
\pi_1 \circ \hat \phi(w',t) = \pi_1 \circ \hat \phi(w,t) + z \quad \text{if } \pi_1(w') = \pi_1(w) + z
\end{equation}

From $k \circ k^{-1} = id$ we calculate that 
\begin{equation} \label{eq:graphkuk}
\langle \partial_t (\pi_1 \circ \hat \phi)(w,t) , v \rangle = \Vert v \Vert^2
\end{equation}
for every $(w,t) \in \Lambda$. 
Hence we can write the map $\pi_1 \circ \hat \phi : \Lambda \times \R \to \R^2$ as
\begin{equation} \label{eq:graphdarstellung}
\pi_1 \circ \hat \phi(w,t) = \left(\tfrac{1}{\Vert v \Vert^2}\langle \pi_1(w) , v \rangle + t\right) v + \theta(w,t) v^\perp
\end{equation}
where $\theta: \Lambda \times \R \to \R$ is continuous and differentiable in the second component. 
For $i \in \Z$ we define subsets $\Lambda_i \subset \Lambda$ via 
$$
\Lambda_i = \{w \in \Lambda \ | \ \pi_1(w) \in \R v^\perp + iv\}
$$
Note, that for $w \in \Lambda_0$ we get from \eqref{eq:graphdarstellung}
\begin{equation} \label{eq:graphlambda0}
\pi_1 \circ \hat \phi(w,t) = tv + \theta(w,t) v^\perp
\end{equation}

As an intermediate step we will show that 
\begin{equation} \label{eq:graphadditive}
\hat \phi(w,t + t') = \hat \phi(\hat \phi(w,t'), t)\quad  \text{ for every } t, t' \in \R
\end{equation}

To see this we define curves $\gamma_1, \gamma_2: \R \to \Lambda$ via 
$$
\gamma_1(t) = \hat \phi(w, t' + t)
$$ 
and 
$$
\gamma_2(t) = \hat \phi(\hat \phi(w,t'), t)
$$
Since the curves are both reparametrizations of a geodesic on $\Lambda$ with $\gamma_1(0) = \gamma_2(0)$ they are 
reparametrizations of the same geodesic. From equation \eqref{eq:graphkuk} it follows for every $t$ that 
$$
\langle \partial_t(\pi_1 \circ \gamma_1)(t), v \rangle = \langle \partial_t(\pi_1 \circ \gamma_2)(t), v \rangle
$$
together with $\gamma_1(0) = \gamma_2(0)$ we obtain
$$
\langle \pi_1 \circ \gamma_1(t), v \rangle = \langle \pi_1 \circ \gamma_2(t) , v \rangle 
$$
for every $t \in \R$. 
Since every projection of a geodesic in $\Lambda$ is a graph over $\R v$ this implies that
$$
\pi_1 \circ \gamma_1(t) = \pi_1 \circ \gamma_2(t)
$$
for every $t \in \R$. Finally, since $\Lambda$ is a graph over $\R^2$ we have 
$\gamma_1 = \gamma_2$ and thus we have proven \eqref{eq:graphadditive}.
Next we will prove that
\begin{equation} \label{eq:graphlambdai}
\pi_1 \circ \hat \phi(w,1) \in \Lambda_{i+1} \text{ for every } w \in \Lambda_i 
\end{equation}

To see this note that from \eqref{eq:graphdarstellung} and $\langle \pi_1 (w) , v\rangle = i \Vert v \Vert^2$ it follows that
$$
\langle \pi_1 \circ \hat \phi(w,1) , v \rangle = (i+1) \Vert v \Vert^2
$$

We can now define a graph $\Gamma \subset S^1 \times \R$, which we will later prove to be an invariant circle of the
time-1 map of $L_R$. We set 
$$
\Gamma = \{ (\theta(w,0), \partial_t\theta (w,0)) \ | \ w \in \Lambda_0\}
$$
To see that $\Gamma$ is actually a graph in $S^1 \times \R$ note that for every $w \in \Lambda_0$ we have
$\pi_1(w) = sv^\perp$ for an $s \in \R$. Consequently it follows from \eqref{eq:graphlambda0} that 
$\theta(w,0) = s$ if $\pi_1(w) = sv^\perp$. To see that $\Gamma$ is periodic (i.e. a subset in  $S^1 \times \R$) note that
if $w, w' \in \Lambda_0$ with $\pi_1(w) = sv^\perp$ and $\pi_1(w') = (s+1)v^\perp$ we have 
$\pi_1 \circ \hat \phi(w',t) = \pi_1 \circ \hat \phi(w,t) + v^\perp$ because of \eqref{eq:graphinvariance}. Thus it 
follows from \eqref{eq:graphlambda0} that $\partial_t \theta(w,0) = \partial_t \theta(w',0)$ and hence $\Gamma$ can be
seen as the graph of the 1-periodic function $h: S^1 \to \R$ with 
$$
h(s) = \partial_t \theta(\Lambda(sv^\perp), 0)
$$
Note, that $\Gamma$ is a continuous graph since $h$ is a composition of continuous functions. We can thus 
pick an $R > 0$ large enough, so that $\Gamma \subset S^1 \times (-R,R)$.
To see now that $\Gamma$ is invariant under the time-1 map $\varphi: S^1 \times \R \to S^1 \times \R$ of the 
Lagrangian $L_R$ observe that for a $w \in \Lambda_0$ it follows from \eqref{eq:graphdarstellung} and 
\eqref{eq:graphadditive} that 

\begin{flalign*}
\theta(w, 1 + t) &= \frac{1}{\Vert v \Vert^2} \langle \pi_1 \circ \hat \phi(w, 1+t) , v^\perp \rangle \\
&= \frac{1}{\Vert v \Vert^2} \langle \pi_1 \circ \hat \phi(\hat \phi(w,1), t) , v^\perp \rangle
\end{flalign*}

Since $\hat \phi(w,1) \in \Lambda_1$ (see \eqref{eq:graphlambdai}) and because of the invariance
\eqref{eq:graphinvariance} there exists a $w' \in \Lambda_0$, such that the last line is equal to

\begin{flalign*}
&= \frac{1}{\Vert v \Vert^2} \langle \pi_1 \circ \hat \phi(w', t) + v , v^\perp \rangle \\
&= \frac{1}{\Vert v \Vert^2} \langle \pi_1 \circ \hat \phi(w', t) , v^\perp \rangle \\
&= \theta(w',t)
\end{flalign*}
for every $t \in \R$. This implies that $\theta(w,1) = \theta(w',0)$ and 
$\partial_t\theta(w,1) = \partial_t\theta(w',0)$ and hence using theorem \ref{schroeder1} the time-1 
map $\varphi$ with 
$$
\varphi(\theta(w,0), \partial_t\theta(w,0)) = (\theta(w,1), \partial_t \theta(w,1))
$$ 
maps $\Gamma$ onto itself.
Applying the Legendre transform to the graph $\Gamma$ yields an invariant graph of the time-1 map $\psi$ of 
the associated Hamiltonian $H_R$. 
Assume now, that $\Gamma$ has rational rotation number. Consequently there exists a periodic 
point $x = (\theta(w,0), \partial_t \theta(w,0))$ in $\Gamma$. This implies that the geodesic with 
initial velocity $w$ is closed, which is a contradiction to $\Lambda$ having no closed geodesics.
\end{proof}

\begin{lem} \label{circletotorus}
Let $U \subset ST^2$ be an instability region with boundary graphs $\Lambda_-$ and $\Lambda_+$. Assume that $U$ has 
bounded direction (with respect to $ v = e_1$). If $R >0$ is chosen large enough, such that the time-1 
map of the Lagrangian $L_R$ has two corresponding invariant
circles $\Lambda_-$ and $\Lambda_+$, then the only invariant circles in the subset $B \subset S^1 \times \R$ 
enclosed by $\Lambda_-$ and $\Lambda_+$ are the boundary circles $\Lambda_-$ and $\Lambda_+$.
\end{lem}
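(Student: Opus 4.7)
The plan is to argue by contradiction, in effect running the correspondence of Lemma \ref{torustocircle} backwards: from any invariant circle $\Gamma \subset B$ I would construct an invariant torus $\Lambda \subset U$ of the geodesic flow, and then the definition of an instability region forces $\Lambda \in \{\Lambda_-,\Lambda_+\}$, hence $\Gamma$ is the boundary circle corresponding to $\Lambda$.

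First I would fix $R$ large enough so that both boundary circles $\Lambda_\pm$ produced by Lemma \ref{torustocircle} lie in the tube $S^1 \times (-R,R)$. Since $\Lambda_\pm$ are Lipschitz graphs by Theorem \ref{twist-lipschitz} and $B$ is the compact region enclosed between them, all of $B$ lies in $S^1 \times (-R,R)$. In particular, any hypothetical invariant circle $\Gamma \subset B$ — itself a Lipschitz graph by Theorem \ref{twist-lipschitz} — is contained in this tube, where $L_R = L$. Therefore Theorem \ref{schroeder1} applies to every Euler-Lagrange solution $\theta_s$ of $\wt L_R$ whose initial condition $(\theta_s(0),\theta_s'(0))$ sits in the lift of $\Gamma$: each such $\theta_s$ yields a reparametrization $\gamma_s(t) = tv + \theta_s(t)v^\perp$ of a genuine $\wt F$-geodesic, with $v = e_1$.

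Next I would assemble $\Lambda$ as the $\phi^t$-saturation of the velocities $\dot\gamma_s$ projected to $ST^2$. Invariance of $\Gamma$ under the time-1 map $\varphi$ together with the identification $\varphi(\theta(0),\theta'(0)) = (\theta(1),\theta'(1))$ and the $\Z^2$-periodicity of $\wt F$ shows that the orbit through each $\dot\gamma_s(0)$ returns to the section at integer times, so the union is $\phi^t$-invariant. Because $\Gamma$ is a Lipschitz graph over $S^1$, the initial data $s \mapsto (\theta_s(0),\theta_s'(0))$ depends continuously on $s$, and continuous dependence on initial conditions for the geodesic flow then presents $\Lambda$ as the image of a continuous map $T^2 \to ST^2$. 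To see that this image is actually a graph (and not merely a continuous surface), I would use that the projections $\gamma_s$ are graphs over $\R v$ by construction and that distinct $s$'s produce disjoint projected curves — otherwise two graphs $\theta_s,\theta_{s'}$ would meet, contradicting the graph property of $\Gamma$ under forward iteration of $\varphi$. Finally, the sandwich $\Lambda_- \leq \Gamma \leq \Lambda_+$ in $B$ propagates to a sandwich of the associated tori in $U$ at integer return times, and smoothness of the flow fills in the intermediate times, giving $\Lambda \subset U$.

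The main obstacle is the graph property of $\Lambda$: verifying that the family $\{\gamma_s\}$ foliates the strip between the two boundary geodesic sheets and that no two trajectories collapse on $T^2$, since a priori the Lipschitz regularity of $\Gamma$ gives only measurable transverse velocities at the countable set of bad points. I would handle this by working in the universal cover, exploiting that the $\varphi$-invariance of $\Gamma$ prevents any crossing of $\theta_s$ and $\theta_{s'}$ at integer times, and then invoking the strict convexity of $L$ (which rules out tangencies of distinct Euler-Lagrange solutions) to extend the non-crossing to all real times. Once $\Lambda$ is established as an invariant torus strictly between $\Lambda_-$ and $\Lambda_+$, the hypothesis that $U$ is an instability region yields $\Lambda = \Lambda_-$ or $\Lambda = \Lambda_+$, and therefore $\Gamma$ must coincide with the corresponding boundary circle, completing the contradiction.
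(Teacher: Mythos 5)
Your overall strategy is the same as the paper's: run Lemma \ref{torustocircle} backwards, turning a hypothetical invariant circle $\Gamma \subset B$ into a $\phi^t$-invariant torus of the geodesic flow, and let the definition of an instability region force $\Gamma$ to be one of the boundary circles. The genuine gap is in the step you yourself flag as the main obstacle: proving that the curves $\gamma_s(t) = tv + \theta_s(t)v^\perp$ never cross, so that they foliate $\R^2$ and the union of their (normalized) velocities is a graph. You argue: no crossings at integer times (because $\Gamma$ is a graph) plus no tangencies (uniqueness of Euler--Lagrange solutions, strict convexity), hence no crossings at any time. That implication fails: two distinct solutions can cross transversally an even number of times inside an interval $(n,n+1)$ and still sit in the correct order on the graph $\Gamma$ at both integer endpoints; nothing in your argument excludes this, and a transversal crossing is not a tangency, so strict convexity does not help. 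The paper closes exactly this hole by invoking Theorem \ref{twist-lipschitz} at every real time $t$: the set $\Gamma_t := \varphi^{0,t}(\Gamma)$ is invariant under $\varphi^{t,t+1}$, which by the time-periodicity of $L_R$ again lies in $\Pone$, hence $\Gamma_t$ is a Lipschitz graph for every $t$; therefore $\theta_r(t) = \theta_p(t)$ forces $\theta_r'(t) = \theta_p'(t)$, and only then does the no-tangency argument apply and yield $r = p$. Without this use of Birkhoff's graph theorem at intermediate times, your foliation claim, and hence the graph property of $\Lambda$, is unproved.

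A second, smaller gap: appealing to continuous dependence on initial conditions only exhibits $\Lambda$ as the image of a continuous map in the parameters $(s,t)$, not as the graph of a continuous map $X: T^2 \to S^1$, which is what the paper's definition of an invariant torus requires. One must invert the parametrization $(s,t) \mapsto \gamma_s(t)$ continuously; the paper does this by showing that $g(s,t) = h_t(s)$ (the slope of the graph $\Gamma_t$ over the position $s$) is continuous, via a monotonicity and compactness argument on the parameters $r_n$ solving $\theta_{r_n}(t_n) = s_n$. That argument again leans on the graph property of every $\Gamma_t$, so it cannot be decoupled from the fix above; your sketch does not supply a substitute for it.
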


\begin{proof}

To prove this theorem we have to show that if we have an invariant circle $\Gamma$ of the map 
$\varphi$, which is not equal to $\Gamma_-$ or $\Gamma_+$ then we also have an
$\phi$-invariant torus $\Lambda$ not equal to $\Lambda_-$ or $\Lambda_+$.
Now, assume that $\Gamma \subset S^1 \times \R$ is such an invariant circle of $\varphi$.  It follows from
theorem \ref{twist-lipschitz} that $\Gamma$ is equal to the graph $\{(s, h(s)) | s \in S^1\}$  of a Lipschitz function
$h: S^1 \to \R$. For every fixed $s \in S^1$ the map $t \mapsto \varphi^{0,t}(s,h(s))$ is a trajectory of the 
Euler-Lagrange flow on $S^1 \times \R$. We define maps  $\theta_s: \R \to \R$ via
$$
(\theta_s(t), \theta_s'(t)) := \varphi^{0,t}(s,h(s))
$$
Note that the maps $(s,t) \mapsto \theta(s,t)$ and $(s,t) \mapsto \theta_s'(t)$ are continuous, since $\theta_s$ and
$\theta_s'$ are the compositions of $\varphi^{0,t}(s,h(s))$ with the projections of $S^1 \times \R$ onto $S^1$ respectively
$\R$. 
We extend $\theta$ to a map $\theta: \R^2 \to \R$, $(s,t) \mapsto \theta_s(t)$ by seeing $\Gamma$ as a 
1-periodic graph in $\R^2$ and using the Euler-Lagrange flow $\varphi$ of the lifted Lagrangian $\wt L$ in the 
definition of $\theta$. We then have 
$$
(\theta_{s+1}(t), \theta_{s+1}'(t)) = (\theta_s(t) + 1, \theta_s'(t))
$$
Observe that since $\varphi^{0,0} = id$ we have $\theta_s(0) = s$ for every $s \in \R$. 
Note that for $r \neq p$ we have $\theta_r(t) \neq \theta_p(t)$. To see this assume that we have 
$\theta_r(t) = \theta_p(t)$. Since the set $\varphi^{0,t}(\Gamma) = \{(\theta_s(t), \theta_s'(t)) | s \in \R \}$ is invariant
under the map $\varphi^{t,t+1}$ it is a Lipschitz graph. Hence it also holds that $\theta_r'(t) = \theta_p'(t)$.
We have the associated reparametrizations of geodesics $\gamma_r(t) = tv + \theta_r(t)v^\perp$ and 
$\gamma_p(t) = tv + \theta_p(t) v^\perp$. This would imply that the geodesics meet at the point 
$\gamma_r(t) = \gamma_p(t)$ with $\dot \gamma_r(t) = \dot \gamma_p(t)$, which is not possible because geodesics
can not become tangent to each other. 
As mentioned above, since the sets $\Gamma_t := \varphi^{0,t}(\Gamma)$ are Lipschitz graphs there exist periodic
Lipschitz functions $h_t : \R \to \R$ with $\Gamma_t = \{(s,h_t(s) | s \in \R\}$ with 
$$
h_t(\theta_s(t)) = \theta_s'(t).
$$
We define a function $g: \R^2 \to \R$ via
$$
g: (s,t) \mapsto h_t(s).
$$
We will now show that $g$ is continuous. To see this take real sequences $t_n \to t$ and $s_n \to s$. Since $\Gamma_t$
is a Lipschitz graph there exists exactly one $r_n \in \R$ for every $n \in \N$ with 
$$
\theta_{r_n}(t_n) = s_n.
$$ 
The sequence $r_n$ is bounded. To see this observe that it follows from the continuity of the map 
$(s,t) \mapsto \theta_s(t)$ and the graph property of the sets $\Gamma_t$ that the map 
$s \mapsto \theta_s(t)$ is strictly increasing for every fixed $t$. Now, assume that the sequence $r_n$ is 
unbounded. Without loss of generality we assume that $r_n$ is unbounded from above with $r_n \geq n$ for 
every $n \in \N$. Then we have 
$$
\theta_{r_n}(t_n) \geq \theta_n(t_n) = \theta_0(t_n) + n 
$$
and thus the sequence $\theta_{r_n}(t_n)$ is also unbounded. This contradicts the fact that $s_n$ converges 
to $s$ and consequently the sequence $r_n$ has to be bounded. 
To see that $r_n$ 
actually converges assume that there are two limit points $r = \lim_{i \to \infty} r_{n_i}$ and $p = \lim_{j \to \infty}
r_{n_j}$. Since we have $\theta_{r_{n_i}}(t_{n_i}) = s_{n_i}$ it follows from the continuity of $\theta$ that 
$\theta_r(t) = s$. Analogously we obtain $\theta_p(t) = s$. As above it follows that $r = p$.
To see that $g$ is continuous observe that
$$
h_{t_n}(s_n) = h_{t_n}(\theta_{r_n}(t_n)) = \theta_{r_n}'(t_n)
$$
and hence it follows from the continuity of $(s,t) \mapsto \theta_s'(t)$ that 
$$
\lim_{n \to \infty} h_{t_n}(s_n) = \lim_{n \to \infty} \theta_{r_n}'(t_n) = \theta_r'(t) = h_t(s) 
$$
Thus we have proven that $g: (s,t) \mapsto h_t(s)$ is continuous.
To construct an invariant graph in $ST^2$, observe that since the reparametrized geodesics $\gamma_s$ with
$\gamma_s(t) = tv + \theta_s(t)v^\perp$ can not 
cross and since $\gamma_{s+1} = \gamma_s + v^\perp$ the geodesics $\gamma_s$ foliate $\R^2$. This allows
us to define a graph $\Lambda \subset S\R^2$ via 
$$
\Lambda = \left\{ \left( \gamma_s(t) , \frac{\dot \gamma_s(t)}{F(\dot \gamma_s(t))}\right)  : (s,t) \in \R^2 \right\}
$$
Furthermore, 
$\Lambda$ is $\phi$-invariant since it only consists of images of geodesics. 
We define a map $X: \R^2 \to \R^2$  via 
$$
X: x \mapsto v + h_{\left\langle x, \frac{v}{\Vert v \Vert^2} \right\rangle} \left( \left\langle x, \frac{v^\perp}{\Vert v \Vert^2} \right\rangle \right) v^\perp
$$
Observe that $X$ is continuous (because $h$ is)  and maps $\gamma_s(t)$ to $\dot \gamma_s(t)$. Hence $\Lambda$ is 
the graph of the continuous function 
$$
x \mapsto \frac{X(x)}{F(X(x))}
$$
What is left to prove is that $X$ is $\Z^2$ periodic, i.e. $\Lambda$ is actually a graph in $ST^2$. We will show that if we 
assume our instability region to have bounded direction with respect to $v \in \Z^2 - \{0\}$ then $X$ is 
$v\Z \times v^\perp \Z$-periodic. As a special case we get that $X$ is $\Z^2$-periodic if $v$ is chosen to be
equal to $e_1$. 
To prove that $X$ (and thus $\frac{X}{F(X)}$) is $v\Z \times v^\perp \Z$-periodic observe that
for every $(s,t) \in \R^2$ we have 
\begin{flalign*}
\gamma_s(t) + v^\perp &= tv + \theta_s(t)v^\perp + v^\perp \\
&= tv + (\theta_s(t) + 1) v^\perp \\
&= tv + \theta_{s+1}(t) v^\perp \\
&= \gamma_{s+1}(t)
\end{flalign*}
Since $\theta_s'(t) = \theta_{s+1}'(t)$ we have  $\dot \gamma_s(t) = \dot \gamma_{s+1}(t)$ and thus $X$ is $v\Z$-periodic.
From the invariance of $\Gamma_t$ under $\varphi^{t,t+1}$ we obtain that for every $s \in \R$ there exists an $r \in \R$ 
with 
$$
\varphi^{t,t+1} (\theta_r(t), \theta_r'(t)) = (\theta_s(t), \theta_s'(t))
$$
Thus we have 
\begin{flalign*}
\gamma_s(t) + v &= (t+1)v + \theta_s(t)v^\perp \\
&=(t+1) v + \theta_r(t+1) v^\perp \\
&= \gamma_r(t+1)
\end{flalign*}

And since $\theta_r'(t+1) = \theta_s'(t)$ we have $\dot \gamma_r(t+1) = \dot \gamma_s(t)$. Hence we have shown the
$v\Z$-periodicity of $X$.

\end{proof}

We can now prove the main theorem.

\begin{proof} (theorem \ref{main})

If $R >0$ is large enough lemma \ref{torustocircle} guarantees that the time-1 map $\varphi$ of $L_R$ has 
two invariant circles $\Gamma_-$ and $\Gamma_+$ in $S^1 \times (-R,R)$, which correspond to the invariant boundary 
graphs. With lemma \ref{circletotorus} it follows that the region bounded by the corresponding invariant circles of the 
Hamiltonian time-1 map is a Birkhoff region of instability. Thus, after going back to the Lagrangian setting, we 
obtain from theorem \ref{mather1} a point $x$ in the area between the boundary circles, and sequences $t'_n \to \infty$, $s'_n \to 
-\infty$ with 
$$
\lim_{n \to \infty} \varphi^{0,t'_n}(x) \in \Gamma_+
$$
and
$$
\lim_{n \to \infty} \varphi^{0,s'_n}(x) \in \Gamma_-
$$
Using theorem \ref{schroeder1} we obtain after reparametrization a geodesic $c: \R \to T^2$ and sequences $t_n \to \infty$,
$s_n \to -\infty$ with 
$$
\lim_{n \to \infty} \dot c(t_n) \in \Lambda_+
$$
and 
$$
\lim_{n \to \infty} \dot c(s_n) \in \Lambda_-
$$
\end{proof}


\end{document}